\documentclass[11pt, reqno]{amsart}
\usepackage{amsmath, amsfonts, amssymb}
\usepackage[left=1.5in, right=1.5in, top=1in, bottom=1in]{geometry}
\usepackage[dvipsnames, svgnames]{xcolor}
\usepackage[inline]{asymptote}
\usepackage{adjustbox, amssymb, enumitem, tikz-cd, multirow, mathtools}

\usepackage[pdfusetitle, pdfencoding=auto, psdextra, breaklinks=true]{hyperref}
\usepackage{xcolor}

\usepackage[nameinlink]{cleveref}

\renewcommand{\textbf}[1]{{\bfseries\boldmath #1}}
\setlength\arraycolsep{2pt}
\newcommand{\vocab}[1]{\textbf{\textcolor{BrickRed}{\boldmath #1}}}

\newcommand{\calA}{\mathcal A}
\newcommand{\calB}{\mathcal B}
\newcommand{\calF}{\mathcal F}
\newcommand{\calG}{\mathcal G}
\newcommand{\calH}{\mathcal H}
\newcommand{\calM}{\mathcal M}
\newcommand{\calS}{\mathcal S}
\newcommand{\calT}{\mathcal T}
\newcommand{\calU}{\mathcal U}
\newcommand{\eps}{\varepsilon}
\newcommand{\bbP}{\mathbb P}

\definecolor{mylinkcolor}{rgb}{0.0,0.0,0.7}
\definecolor{myurlcolor}{rgb}{0.0,0.0,0.7}
\hypersetup{
 colorlinks,
 urlcolor=myurlcolor,
 citecolor=myurlcolor,
 linkcolor=mylinkcolor,
 breaklinks=true
}

\usepackage{thmtools}
\declaretheorem{theorem}[name=Theorem, numberwithin=section]
\declaretheorem{lemma}[name=Lemma, sibling=theorem]
\declaretheorem{claim}[name=Claim, sibling=theorem]
\declaretheorem{proposition}[name=Proposition, sibling=theorem]
\declaretheorem{corollary}[name=Corollary, sibling=theorem]
\declaretheorem{definition}[style=definition, name=Definition, sibling=theorem]
[style=definition, name=Remark, sibling=theorem]
[style=definition, name=Example, sibling=theorem]
\declaretheorem{openprob}[style=definition, name=Open Question, sibling=theorem]
\declaretheorem{theorem*}[name=Theorem, numbered=no]

\usepackage[backend=bibtex, style=alphabetic, sorting=nyt,
maxnames=20, maxalphanames=20, backref, isbn=false, url=true, doi=true]{biblatex}
\addbibresource{references.bib}

\DefineBibliographyStrings{english}{%
  backrefpage = {$\uparrow$ p.},%
  backrefpages = {$\uparrow$ pp.}%
}
\defbibheading{bibliography}[\refname]{\section*{#1}}

\title{Improved Bounds on Rainbow $k$-partite Matchings}
\author{Pitchayut Saengrungkongka}
\address{
  Department of Mathematics,
  Harvard University,
  Cambridge,
  MA 02138,
  USA
}
\email{saengrun@math.harvard.edu}

\begin{document}
\begin{abstract}
Let $n$, $s$, and $k$ be positive integers.
We say that a sequence $f_1,\dots,f_s$ of nonnegative integers
is \emph{satisfying} if for any collection of $s$ families 
$\mathcal F_1,\dots,\mathcal F_s\subseteq [n]^k$
such that $|\mathcal F_i|>f_i$ for all $i$,
there exists a rainbow matching, i.e.,
a list of pairwise disjoint tuples 
$F_1\in\mathcal F_1$, \dots, $F_s\in\mathcal F_s$.
We investigate the question, posed by Kupavskii 
and Popova, of determining the smallest $c=c(n,s,k)$
such that the arithmetic progression 
$c$, $n^{k-1}+c$, $2n^{k-1}+c$, \dots, $(s-1)n^{k-1}+c$ is satisfying.
We prove that the sequence is satisfying if
$c\geq \Omega_k(\max(s^2n^{k-2}, sn^{k-3/2}\sqrt{\log s}))$,
improving the previous result by Kupavskii and Popova.
We also study satisfying sequences for $k=2$ using the polynomial method,
extending the previous result by Kupavskii and Popova 
to when $n$ is not prime.
\end{abstract}
\maketitle

\section{Introduction}
One of the most basic open problems in extremal set theory
is the \textbf{Erd\H os matching conjecture}
\cite{erdos_matching}, which states that if $n\geq ks$ and
$\calF\subseteq \binom{[n]}k$ (see \Cref{subsec:notation}
for notation) satisfies 
$$|\calF| > t(n,s,k) \coloneq \max\left(\binom{sk-1}k, 
\binom nk - \binom{n-s+1}k\right),$$
then there exist pairwise disjoint 
sets $F_1,\dots,F_s\in\calF$.
In other words, this conjecture asks 
whether $t(n,s,k)$ is the maximum number of hyperedges 
in a $k$-uniform hypergraph without $s$ disjoint hyperedges.
Note that the bound $t(n,s,k)$ cannot be improved 
because neither family $\calF = \binom{[sk-1]}{k}$
nor $\calF = \binom{[n]}{k} \setminus \binom{[n-s+1]}{k}$
contain such pairwise disjoint $F_1,\dots,F_s$.
This conjecture is known to hold when $k=2$ \cite[Thm.~4]{emc_2},
when $k=3$ \cite{emc_3}, and when $n\geq \frac 53sk$
and $s$ is sufficiently large \cite[Thm.~1]{emc_bound},
but the general case remains open.

Huang, Loh, and Sudakov \cite{rainbow_emc} introduced 
the rainbow version of the Erd\H os matching conjecture,
which states that if $\calF_1,\dots,\calF_s\subseteq \binom{[n]}k$
are such that $|\calF_i| > t(n,s,k)$ for all $i\in [s]$,
then there exist pairwise disjoint sets 
$F_1\in\calF_1,\dots, F_s\in\calF_s$.
In other words, instead of having a single hypergraph,
we have a hypergraph colored in $s$ colors
(a hyperedge can have multiple colors),
and we want a rainbow matching, i.e., $s$ disjoint 
edges of different colors.
This conjecture was proved when $k=2$
\cite[Thm.~3.1]{rainbow_k_partite_emc}
and when $n>Csk$ for 
some large constant $C$ \cite[Thm.~1.2]{rainbow_emc_bound},
but the general case remains open.

One can consider a $k$-partite analogue
of these two conjectures.
Two tuples $F,G\in [n]^k$ are \vocab{disjoint}
if the $i$-th coordinate of $F$ is different 
from the $i$-th coordinate of $G$
for each $i\in [k]$. 
Then the $k$-partite analogue of the Erd\H os matching conjecture
states that given $\calF\subseteq [n]^k$
such that $|\calF| > (s-1)n^{k-1}$,
there exist $s$ pairwise disjoint tuples 
$F_1,\dots,F_s\in\calF$.
This has an easy proof via an averaging argument:
a random matching in $[n]^k$ is expected to 
intersect $\calF$ in $|\calF|/n^{k-1} > s-1$ elements,
so there exists a matching that intersects $\calF$
in at least $s$ elements.
The bound $(s-1)n^{k-1}$ cannot be improved 
because one can take $\calF = [s-1]\times [n]^{k-1}$,
which clearly has no such pairwise disjoint tuples.

Aharoni and Howard \cite{rainbow_k_partite_emc}
introduced the rainbow version of this problem,
which says that if $\calF_1,\dots,\calF_s\subseteq [n]^k$
satisfy $|\calF_i| > (s-1)n^{k-1}$ for all $i$,
then there exist $s$ pairwise disjoint tuples 
$F_1\in\calF_1,\dots,F_s\in\calF_s$.
This conjecture was proved for $s\geq 470$ 
by Kiselev and Kupavskii \cite[Thm.~1]{rainbow_matching}.
They also considered an asymmetric version of this problem,
where each subset is given a separate lower bound.
\begin{definition}
Let $n\geq s$ and $k$ be positive integers.
Then the sequence $f_1,\dots,f_s$ is \vocab{satisfying}
if for every list of subsets $\calF_1,\dots,\calF_s\subseteq [n]^k$
with $|\calF_i| > f_i$,
there exists a list of $s$ pairwise disjoint elements 
$F_1\in\calF_1,\dots, F_s\in\calF_s$.
Such a list is called a
\vocab{rainbow matching} of $\calF_1,\dots,\calF_s$.
\end{definition}
Indeed, Kiselev and Kupavskii \cite[Thm.~1]{rainbow_matching}
showed that the arithmetic sequence $f_i  = (i-1 + C\sqrt{s\log s})n^{k-1}$
is satisfying for some absolute constant $C>0$.
Their proof takes the intersection
between a uniformly random matching and $\calF_i$
and uses the fact that the size of this intersection 
has subgaussian concentration.

In a sequel paper, Kupavskii and Popova 
\cite{satisfying_sequence} proved various results 
about satisfying sequences, one of which is the following theorem.
\begin{theorem}[{\cite[Thm.~9]{satisfying_sequence}}]
If $n>2^5s\log_2(sk)$ and 
$$c\geq 4s^2n^{k-2} + 2^{15}s^3\log_2(ks)^3 n^{k-3},$$
then the arithmetic sequence 
$f_i = (i-1)n^{k-1}+c$
is satisfying.
\end{theorem}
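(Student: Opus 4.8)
The plan is to locate the rainbow matching inside a single matching $M$ of $[n]^k$ (a set of pairwise disjoint tuples) and to certify its existence by a Hall‑type condition. Form the bipartite \emph{availability graph} with parts $[s]$ and $E(M)$, joining a colour $i$ to an edge $e\in M$ when $e\in\calF_i$; a rainbow matching supported on $M$ is exactly a matching of this graph saturating $[s]$. First relabel the families so that $|\calF_1|\le\cdots\le|\calF_s|$; this keeps the hypothesis $|\calF_i|>(i-1)n^{k-1}+c$ intact, since for each $i$ at least $s-i+1$ of the original families have size exceeding $(i-1)n^{k-1}+c$. A short application of Hall's theorem then shows that the availability graph has the desired matching whenever
\[
\#\{\,i\in[s]:|M\cap\calF_i|\le t\,\}\ \le\ t\qquad\text{for all integers }t\ge 0,
\]
the point being that $\bigcup_{i\in S}(M\cap\calF_i)$ always contains $M\cap\calF_{i_0}$ for the $i_0\in S$ with the most available edges, so that the displayed condition at $t=|S|-1$ rules out a Hall violator. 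This reformulation is what lets us avoid the prohibitive $2^s$ union bound over colour sets: it suffices to control, for $O(s^2)$ pairs $(t,i)$, the single event $\{|M\cap\calF_i|\le t\}$.

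A uniformly random full matching is not yet good enough, since a family as small as $|\calF_i|\approx c\approx s^2n^{k-2}$ has $\mathbb E|M\cap\calF_i|\approx c/n^{k-1}\ll 1$ and will typically be missed, violating the $t=0$ case. I would therefore dispose of the families of smallest index in a preliminary phase: produce a partial rainbow matching $P=(P_1,\dots,P_r)$ with $P_i\in\calF_i$ (so the first $r$ colours are already assigned), with $r$ a threshold of order $\log(ks)$, and — crucially — choose $P$ so that it blocks as few tuples of each remaining family as possible. A uniformly chosen tuple meets an $\approx k/n$ fraction of any fixed family, so an $r$‑edge partial matching meets an $\approx rk/n$ fraction on average, and a first‑moment/union‑bound argument over the families produces a $P$ that costs each remaining $\calF_i$ only about what the shrinkage of the ground set from $n$ to $n-r$ can absorb. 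After deleting the tuples meeting $P$ we are left with $s-r$ families over $[n-r]^k$ whose lower bounds are shifted \emph{up} by $r$, i.e.\ of the form $(r+j-1)(n-r)^{k-1}+c'$ with $c'$ only negligibly smaller than $c$; this upward shift is exactly the slack a random matching needs for the rest.

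For the reduced instance I would take $M$ uniformly at random among full matchings of $[n-r]^k$ and verify the displayed condition. Since $\mathbb E|M\cap\calF_i'|>(r+j-1)+c'/(n-r)^{k-1}$ (with $j$ the new index), what is needed is a lower‑tail bound: $|M\cap\calF_i'|$ should not fall $\gtrsim\max(r,\,c'/n^{k-1})$ below its mean. Writing $|M\cap\calF_i'|$ as a sum of $n-r$ indicators, one per edge of $M$ — a function of $k-1$ independent uniform permutations whose summands are negatively associated — I would apply a Bernstein‑type inequality in which the variance proxy is the mean itself, rather than the $O(kn)$ proxy a crude bounded‑difference/Azuma estimate would force (that version is a full factor of $k$ too weak and would not give the stated bound). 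The two regimes of this Bernstein bound, matched against the $O(s^2)$‑term union bound, are what produce the two summands of $c$: the Gaussian/small‑deviation regime yields the $4s^2n^{k-2}$ term, while the linear regime — where one additionally pays for $r\approx\log(ks)$ and for the first‑phase shrinkage — yields the $2^{15}(s\log_2(ks))^3n^{k-3}$ term.

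I expect the main obstacle to be the preliminary phase: showing that a partial rainbow matching for the smallest‑index families can genuinely be chosen with small blocking, so that the passage to the reduced instance costs only a lower‑order amount in $c$. This is where the factor‑of‑$k$ bookkeeping and the possibility of a few very large families must be handled with care — for instance by treating families of size $\gg n^{k-1}$ separately, since for those the availability is automatically far larger than $s$ and the precise blocking is irrelevant. The remaining work — establishing the Bernstein inequality for $|M\cap\calF|$ with the sharp (mean‑sized) variance proxy, and tracking the shifted indices and the shrunk ground set through the union bound — should be comparatively routine once the reduction is in place.
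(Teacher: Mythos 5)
Your plan replaces the paper's argument with the random-matching strategy (a uniformly random perfect matching of $[n]^k$, Hall's condition, and a Bernstein-type lower tail for $|\calM\cap\calF_i|$), but that strategy cannot produce the stated bound $c\geq 4s^2n^{k-2}+2^{15}s^3\log_2(ks)^3n^{k-3}$. The fluctuation of $|\calM\cap\calF_i|$ about its mean $\mu_i\approx |\calF_i|/n^{k-1}$ is of order $\sqrt{\mu_i}$, and for the large-index families $\mu_i$ is of order $s$; since you are union-bounding over $\Theta(s^2)$ events, the Gaussian regime of Bernstein forces a slack of order $\sqrt{s\log s}$ in the count, i.e.\ $c\gtrsim\sqrt{s\log s}\cdot n^{k-1}$. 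When $s\ll n^{2/3}$ this is strictly larger than $4s^2n^{k-2}$, so no amount of care in the preliminary phase, the shrinkage bookkeeping, or the choice of concentration inequality closes the gap — the preliminary phase only offloads $r\approx\log(ks)$ colors and does nothing to shrink the $\sqrt{s}$-scale fluctuations of the remaining ones. Your claim that the Gaussian regime ``yields the $4s^2n^{k-2}$ term'' is where the accounting goes wrong: that regime yields a $\sqrt{\mu\log}$-sized deviation, not a $\mu^2/n$-sized one.

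The actual proof of this theorem (in the cited paper, as summarized here) avoids random matchings entirely. It applies spread approximation to replace each $\calF_i$ by a family $\calS_i$ of subsets of $[k]\times[n]$ of size at most $2$, with the guarantee that $\calS_i$ captures all but $O(s^3\log^3(ks)\,n^{k-3})$ members of $\calF_i$ and that a rainbow matching in the $\calS_i$ lifts to one in the $\calF_i$; then a deterministic combinatorial argument finds a rainbow matching in the $\calS_i$ using only the cardinality hypotheses. Because the $\calS_i$ consist of $0$-, $1$-, and $2$-element sets, the ``elementary argument'' has no $\sqrt{s}$-type loss, which is what makes $4s^2n^{k-2}$ attainable. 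Random-matching concentration is the right tool for the \emph{other} theorems in this paper (and for the Kiselev--Kupavskii $(i-1+C\sqrt{s\log s})n^{k-1}$ bound), but it is the wrong tool here, and the two methods genuinely produce incomparable bounds — as Table~1 in the introduction records.
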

Their proof uses the method of spread approximation 
(first introduced by Kupavskii and Zakharov \cite{spread_approx})
to reduce the problem to finding matchings 
in a certain auxilary family of sets,
where each set has cardinality at most two.
Then they conclude by an elementary argument.
The following question arises naturally from the above theorem.
\begin{openprob}
\label{prob:ap_satisfying}
Given $n,s,k$,
determine the smallest $c=c(n,s,k)$ such that the 
sequence $f_i=(i-1)n^{k-1}+c$ is satisfying.
\end{openprob}
\subsection{Our Results}
In this paper, we combine ideas from the proofs of 
\cite[Thm.~1]{rainbow_matching} and \cite[Thm.~9]{satisfying_sequence}
to obtain a better upper bound for $c$.
Our results come in two different forms;
one has a dependence on $k$ and one does not.
\begin{theorem}
\label{thm:with_k}
If $n> s$ and
$$c\geq n^{k-1} + \max\left(k^2sn^{k-\frac 32}\sqrt{8\log(2ks)}, 
8kn^{k-1}\log(2ks)\right),$$
then the sequence $f_i = (i-1)n^{k-1}+c$
is satisfying.
\end{theorem}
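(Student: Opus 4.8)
The plan is to combine the probabilistic sampling idea of Kiselev--Kupavskii with a structural reduction so that the "error term" is paid for only once rather than $s$ times. The naive approach—pick a uniformly random perfect matching $M$ of $[n]^k$ and look at $|M\cap\calF_i|$—gives each intersection a mean of roughly $|\calF_i|/n^{k-1}$ with subgaussian fluctuations of order $\sqrt n$, and a union bound over $i\in[s]$ forces $c$ of order $n^{k-1}\sqrt{s\log s}$. To do better, I would not try to hit all $\calF_i$ simultaneously with one random matching; instead I would build the rainbow matching greedily, tuple by tuple, but choose the tuples \emph{randomly} from a carefully restricted sub-box so that at each step the relevant family is still large on the sub-box, and the deviations compound only additively in $k$ coordinates rather than multiplicatively in $s$.

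More concretely, I would proceed by induction on $k$, peeling off one coordinate at a time. Fix the first coordinate: for each value $a\in[n]$, let $\calF_i^{(a)}\subseteq[n]^{k-1}$ be the "slice" of $\calF_i$ with first coordinate $a$. Since $\sum_a |\calF_i^{(a)}| = |\calF_i| > (i-1)n^{k-1}+c-1$, an averaging argument produces many values $a$ whose slices are large; the point is to choose $s$ \emph{distinct} first-coordinate values $a_1,\dots,a_s$ such that the slice $\calF_i^{(a_i)}$ has size exceeding $(i-1)n^{k-2}+c'$ for a suitable reduced threshold $c'$, and then apply the inductive hypothesis in $[n]^{k-1}$. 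A deterministic pigeonhole here is too lossy; instead I would sample the assignment $i\mapsto a_i$ randomly (e.g. a uniformly random injection, or a random matching between colors and first-coordinate values weighted by slice sizes) and show via a subgaussian/Azuma bound on the sum $\sum_i |\calF_i^{(a_i)}|$—or better, on a threshold-counting functional—that with positive probability every slice used is large enough. Each such step costs an additive error of order $n^{k-3/2}\sqrt{\log(ks)}$ per color in the worst case, and there are $k$ steps, which is where the $k^2 s n^{k-3/2}\sqrt{\log(2ks)}$ term and the $8k n^{k-1}\log(2ks)$ term (the regime where the slice sizes are so small that a cruder bound dominates) come from; the extra $n^{k-1}$ out front absorbs the base case $k=1$, where being satisfying just means $f_s = (s-1)+c > (s-1)+$ something, i.e.\ $s$ distinct singletons, handled trivially.

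The two branches of the maximum correspond to two regimes of Bernstein's inequality: when the per-color slice sizes are comparable to $n^{k-2}$, the variance term dominates and gives the $\sqrt{\log}$-type bound $k^2 s n^{k-3/2}\sqrt{8\log(2ks)}$; when they are small, the one-sided large-deviation (Poisson-tail) term dominates and gives the linear-in-$\log$ bound $8kn^{k-1}\log(2ks)$. I would set up a single concentration lemma—likely a bounded-differences inequality applied to the functional "number of colors $i$ whose chosen slice has size below the reduced threshold" as the random injection is revealed one color at a time—and feed it both regimes. The failure probability at each of the $k$ levels must be kept below, say, $1/(2k)$, so that a union bound over levels still leaves room; this is exactly what the factor $2ks$ inside the logarithm buys.

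\textbf{The main obstacle} I anticipate is controlling the \emph{dependence} introduced by the disjointness constraint across the $s$ colors within a single coordinate: the first-coordinate values $a_1,\dots,a_s$ must be pairwise distinct, so the random choices are not independent, and a clean martingale exposure (reveal $a_1$, then $a_2$, \dots) must be combined with the fact that the residual slice sizes after conditioning are still large enough. Making the bounded-differences constant genuinely $O(n^{k-2})$—rather than $O(n^{k-1})$, which would destroy the improvement—requires that changing one color's assignment perturbs the counting functional by at most the size of one slice, which is where the induction hypothesis' \emph{quantitative} threshold (not just "satisfying" as a black box) must be threaded through carefully. A secondary technical point is bookkeeping the reduced constant $c'$ across $k$ levels so that the telescoped sum of errors matches the stated bound with the claimed numerical constants $8$ and the $k^2$ factor; I expect this to be routine once the concentration lemma is in place, but it is the kind of calculation where the constant $8\log(2ks)$ versus $8\log(2k^2s)$ distinction has to be watched.
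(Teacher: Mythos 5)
Your proposal takes a genuinely different route from the paper. The paper does not peel off coordinates one at a time; instead it first applies a shifting (compression) argument to every coordinate, after which each $\calF_i$ lies within $O(k^2s^2n^{k-2})$ of a union of at most $ks$ hyperplanes $\calH_{j,a}=\{F:(j,a)\in F\}$. A uniformly random perfect matching $\calM$ of $[k]\times[n]$ meets each hyperplane in exactly one element, so $|\calM\cap\calF_i|$ is nearly deterministic, and the Kiselev--Kupavskii concentration bound only needs to be applied to the $O(k^2s^2n^{k-2})$-sized remainder and multiset-overcounting terms. This shrinks the fluctuation scale from the naive $\sqrt{n\log s}$ down to $k^2sn^{-1/2}\sqrt{\log(ks)}$, and a single union bound over the $s$ colors finishes; there is no recursion in $k$.

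There is a concrete gap in your plan. The step ``an averaging argument produces many values $a$ whose slices are large, so a random injection $i\mapsto a_i$ succeeds'' does not follow from the hypothesis alone. If $m$ slices of $\calF_i$ exceed the reduced threshold $T=(i-1)n^{k-2}+c'$, then $|\calF_i|\le m\cdot n^{k-1}+nT$, so the only guarantee is $m\ge (c-nc')/n^{k-1}$. Your recursion forces $c'\approx c/n$, so this lower bound is $\Theta(c/n^{k-1})$, which for the target $c=\Theta(sn^{k-3/2}\sqrt{\log})$ is $\Theta(s/\sqrt{n})$ --- potentially $O(1)$, and the bound is achievable: $\calF_1$ can place its entire excess over the threshold inside one or two slices. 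Then a random injection assigns $a_1$ a useful value with probability $O(1/n)$, and no union bound over colors recovers. The distinctness constraint then becomes a Hall-type bipartite matching problem between colors and slice values with color-dependent thresholds, and your proposal has no handle on Hall's condition for general subsets. The paper's shifting argument is exactly the missing structural ingredient: after shifting, \Cref{lem:low_deg} forces every slice $\calF_i^{(a)}$ with $a\ge s$ to be identical, so mass cannot be hidden in a few slices. Without an analogue of shifting (or another deterministic spreading step), the inductive/random-slicing scheme does not get off the ground. A secondary mismatch: you attribute the $k^2$ in the bound to ``$k$ levels times $k$ per level,'' but each level at cost $n^{k-3/2}\sqrt{\log}$ per color over $k$ levels and $s$ colors would give $ksn^{k-3/2}\sqrt{\log}$; in the paper, the extra factor of $k$ instead comes from the $\binom{ks}{2}\le k^2s^2/2$ count of hyperplane pairs in the inclusion-exclusion for $\calA$.
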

\begin{theorem}
\label{thm:without_k}
If $n > \max(2^5 s\log_2(sk), s)$ and 
$$c \geq n^{k-1} + \max\left(14sn^{k-\frac 32}\sqrt{\log(2ks)},
8kn^{k-1}\log(2ks)\right) + 2^{15} s^3\log_2(ks)^3 n^{k-3},$$
then the sequence $f_i = (i-1)n^{k-1}+c$ is satisfying.
\end{theorem}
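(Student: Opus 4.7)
The plan is to combine the spread-approximation reduction used in \cite[Thm.~9]{satisfying_sequence} with the random-matching concentration argument of \cite[Thm.~1]{rainbow_matching} that already underlies \Cref{thm:with_k}. The $k$-dependent factor in \Cref{thm:with_k} arises because a single transposition inside a random perfect matching of $[n]^k$ (obtained, say, by fixing $k$ independent uniformly random bijections) can flip up to $O(k)$ membership indicators in an arbitrary family $\calF_i\subseteq[n]^k$, so the Lipschitz constant in the McDiarmid/Azuma step scales with $k$. Passing to spread families first via the Kupavskii--Zakharov lemma removes this $k$ dependence, at the cost of exactly the additive error $2^{15}s^3\log_2(ks)^3 n^{k-3}$ already present in \cite[Thm.~9]{satisfying_sequence}.

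Concretely, I would proceed as follows. First, apply the spread-approximation machinery of \cite{spread_approx} in the form used in \cite{satisfying_sequence} to each $\calF_i$, replacing it by the union of upper shadows of a collection $\calS_i$ of kernel partial tuples of size at most $O(\log(ks))$ and discarding at most $O(s^3\log_2(ks)^3 n^{k-3})$ tuples overall. If many of the $\calS_i$ share a common heavy junta, conclude combinatorially as in \cite[Thm.~9]{satisfying_sequence} and absorb the loss into the additive error term; otherwise each $\calF_i$ is essentially spread. In this latter regime, sample a uniformly random perfect matching $M_1,\dots,M_n$ of $[n]^k$, set $Y_i = |\{j : M_j \in \calF_i\}|$, and observe that $\mathbb{E}[Y_i] = |\calF_i|/n^{k-1} \geq (i-1) + c/n^{k-1}$. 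A Hoeffding--Azuma--type bound gives subgaussian concentration of $Y_i$ with variance proxy $O(s^2/n)$ in the spread regime (rather than $O(k^2 s^2/n)$ in the general case), while a Bernstein-type correction accounts for the second term in the $\max$. A union bound over $i\in[s]$ forces $Y_i\geq i$ simultaneously, after which Hall's theorem produces the rainbow matching inside the sampled perfect matching.

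The hard part is the concentration step: establishing that $Y_i$ has a swap-Lipschitz constant independent of $k$ whenever $\calF_i$ is effectively spread. The spread property must be used to bound the number of tuples in $\calF_i$ agreeing with any fixed partial tuple of short length, so that a single coordinate swap in the random matching changes $Y_i$ by an amount controlled by the spread co-degree bound rather than by $k$. Matching the precise coefficient $14$ in the subgaussian term and the coefficient $8$ in the Bernstein term will require careful bookkeeping of the kernel-size threshold in the spread approximation, the tail exponent in the concentration bound, and the union bound over the $s$ colors.
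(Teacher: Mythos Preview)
Your diagnosis of the source of the $k$-factor is incorrect, and this derails the whole plan. A single transposition in one of the $k$ permutations defining a random perfect matching of $[n]^k$ alters exactly two tuples of $\calM$, so for \emph{any} family $\calF\subseteq[n]^k$ the swap-Lipschitz constant of $|\calM\cap\calF|$ is at most $2$, independent of $k$; indeed \Cref{thm:concentration_matching} already has no $k$ in it. The $k^2$ in \Cref{thm:with_k} is combinatorial, not probabilistic: the shifting argument there only guarantees that $\calF_i$ is (nearly) a union of hyperplanes $\calH_{j,a}$ with $|T|\le ks$, and the multi-directional overlaps in $\calB$ have multiplicity up to $k-1$ and total size up to $\binom{ks}{2}n^{k-2}$. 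So ``spreadness controls the Lipschitz constant'' is not the mechanism, and applying concentration directly to a spread $\calF_i$ only yields a deviation of order $\sqrt{|\calF_i|/n^{k-1}\cdot\log}\asymp\sqrt{s\log s}$, recovering the bound of \cite{rainbow_matching} rather than the $s\sqrt{\log/n}$ you need.

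What the paper actually does is run the spread approximation of \cite{satisfying_sequence} all the way down to kernels of size at most \emph{two} (not $O(\log(ks))$), obtaining $\calS_i=\calS_i^{(1)}\sqcup\calS_i^{(2)}$ with the $k$-free bounds $|\calS_i^{(1)}|\le 2(s-1)$ and $|\calS_i^{(2)}|\le 4(s-1)^2$ coming from the no-rainbow-matching condition. The point you are missing is that the size-$1$ kernels give hyperplanes whose intersection with $\calM$ is \emph{deterministic}: $|\calM\cap\calT_{n,k}[\calA_j]|=|\calA_j|$ exactly. Concentration is then applied only to the overlap sets $\calB_i$ and to $\calU_2$, each of size $O(s^2n^{k-2})$, which is what produces the $14sn^{k-3/2}\sqrt{\log(2ks)}$ term. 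Your dichotomy ``heavy junta vs.\ spread'' and the idea of bounding co-degrees in spread families never enter; the whole argument lives on the structured side $\calT_{n,k}[\calS_i]$, with the $2^{15}s^3\log_2(ks)^3n^{k-3}$ leftover absorbed additively exactly as in \cite{satisfying_sequence}.
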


Notice that when $s$ is large, namely
$s\gg n^{3/4+\eps}$, the term $2^{15}s^3\log_2(ks)^3n^{k-3}$
dominates the bound in \Cref{thm:without_k},
so \Cref{thm:with_k} works better.
However, when $s\ll n^{3/4-\eps}$, \Cref{thm:without_k}
works better, but the constant factor in front of 
$sn^{k-3/2} \sqrt{\log(2ks)}$ has dependency on $k$.

To help interpret the result, we hold $k$ constant 
and consider the relative magnitudes of $s$ and $n$.
The best bounds in different regimes of $(s,n)$ 
are described in 
\Cref{table:results}.
We believe that the upper bounds are very unlikely to be tight.
\begin{table}[h]
\begin{tabular}{ccc}
Condition & Best upper bound on $c$
& Best lower bound on $c$ \\ \hline
$s \ll_k n^{1/2-\eps}$ & 
\begin{tabular}{c}
$O_k(s^2 n^{k-2})$   \\
{\footnotesize (\cite[Thm.~9]{satisfying_sequence})}
\end{tabular} &
\begin{tabular}{c}
$\Omega_k(s^2 n^{k-2})$   \\
{\footnotesize (\cite[Claim 7]{satisfying_sequence})}
\end{tabular}
\\
$n^{1/2+\eps}\ll_k s \ll_k n^{3/4-\eps}$ 
&
\begin{tabular}{c} 
$O_k(sn^{k-3/2}\sqrt{\log(2ks)})$ \\
{\footnotesize (\Cref{thm:with_k} or \ref{thm:without_k})} 
\end{tabular} & 
\begin{tabular}{c}
$\Omega_k(n^{k-1})$   \\
{\footnotesize (\cite[Claim 7]{satisfying_sequence})}
\end{tabular}
\\
$n^{3/4+\eps}\ll_k s \ll_k n$ 
& 
\begin{tabular}{c}
$O_k(sn^{k-3/2}\sqrt{\log(2ks)})$ \\ 
{\footnotesize (\Cref{thm:with_k})} 
\end{tabular}& 
\begin{tabular}{c}
$\Omega_k(n^{k-1})$   \\
{\footnotesize (\cite[Claim 7]{satisfying_sequence})}
\end{tabular}
\\
\end{tabular}
\caption{Best bounds for \Cref{prob:ap_satisfying}
when $k$ is held constant.}
\label{table:results}
\end{table}

We also extend the polynomial method result from 
\cite[Thm.\ 3]{satisfying_sequence} for non-prime $n$,
resulting in the following theorem.
\begin{theorem}
\label{thm:polynomial_method}
Let $k=2$, and
let $(a_1,\dots,a_s)$ and $(b_1,\dots,b_s)$ be two permutations of 
$\{0,1,\dots,s-1\}$.
Then the sequence $f_i = n(a_i+b_i)$ is satisfying.
\end{theorem}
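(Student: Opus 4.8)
The plan is to re-examine the polynomial-method proof of the prime-$n$ case, \cite[Thm.~3]{satisfying_sequence}, pin down exactly where it uses that $n$ is prime, and replace that ingredient. The set-up needs no primality: fix $k=2$, write $\Delta(\vec x)=\prod_{1\le i<j\le s}(x_i-x_j)$, identify $[n]$ with $\{0,1,\dots,n-1\}$ inside $\mathbb Q$ (or inside $\mathbb F_p$ for an auxiliary prime $p$ exceeding $n$ and exceeding every integer that shows up below), and let $h_i(x_i,y_i)$ be the unique polynomial of bidegree $\le(n-1,n-1)$ agreeing with the indicator of $\mathcal F_i$ on $[n]^2$. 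Consider
\[
  P(x_1,\dots,x_s,y_1,\dots,y_s)\;=\;\Delta(\vec x)\,\Delta(\vec y)\,\prod_{i=1}^{s}h_i(x_i,y_i).
\]
On the box $[n]^{2s}$ this function equals $\Delta(\vec x)\Delta(\vec y)$ times the indicator that $(x_i,y_i)\in\mathcal F_i$ for all $i$, so $P$ vanishes identically on the box if and only if there is no rainbow matching. Since $n>s-1\ge\alpha_i,\beta_i$, the standard fact underlying Combinatorial Nullstellensatz (a function on $[n]^{2s}$ is not identically zero iff its unique representative of multidegree $\le(n-1,\dots,n-1)$ is a nonzero polynomial) reduces us to producing one monomial with nonzero coefficient in the reduction $\widetilde P$ of $P$ modulo $\bigl(\prod_{c\in[n]}(x_i-c),\ \prod_{c\in[n]}(y_i-c)\bigr)$. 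Mirroring the prime case, the monomial to aim for is $\prod_{i}x_i^{\alpha_i}y_i^{\beta_i}$.

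To compute that coefficient, pair $\widetilde P$ against the dual basis of $1,x,\dots,x^{n-1}$ on $[n]$ (the rows of the inverse Vandermonde matrix of $[n]$) and expand the two Vandermonde factors $\Delta(\vec a)$, $\Delta(\vec b)$ over permutations; one obtains
\[
  \bigl[\textstyle\prod_i x_i^{\alpha_i}y_i^{\beta_i}\bigr]\widetilde P
  \;=\;\sum_{\gamma,\delta\in S_s}\operatorname{sgn}(\gamma)\operatorname{sgn}(\delta)\prod_{i=1}^{s}M_i\bigl(\gamma(i),\delta(i)\bigr),
\]
where $M_i=V_i^{\top}A_iW_i$ is an explicit $s\times s$ matrix built from the $n\times n$ zero/one matrix $A_i$ of $\mathcal F_i$ and from Vandermonde-type matrices $V_i,W_i$ depending only on $n$ and on $\alpha_i$ (resp.\ $\beta_i$). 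Equivalently this is the mixed determinant of $M_1,\dots,M_s$; as each $M_i$ is a sum of rank-one matrices indexed by the cells of $\mathcal F_i$, multilinearity rewrites it as a signed sum, over all rainbow matchings, of $\Delta(\vec a)\Delta(\vec b)$ weighted by a fixed product of inverse-Vandermonde coefficients. The whole point is to show this weighted count is forced nonzero by $|\mathcal F_i|>n(\alpha_i+\beta_i)$ alone.

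That nonvanishing is the main obstacle and the only place the composite case genuinely differs from \cite[Thm.~3]{satisfying_sequence}. In the prime case the identities $\sum_{x\in\mathbb F_n}x^{\,j}\in\{0,-1\}$ kill all but one term of the double sum, and the surviving term equals, up to sign, a product over the colours of a quantity of the form $|\mathcal F_i|-(\text{an explicit nonnegative integer}\le n(\alpha_i+\beta_i))$, hence a product of positive integers. Over $\{0,\dots,n-1\}\subseteq\mathbb Q$ those identities become the genuine power sums $\sum_{c=0}^{n-1}c^{\,j}$, which do not vanish, so the bookkeeping no longer collapses for free; the work is to carry these power sums through the computation and re-establish that the dominant contribution still has the shape $\pm\prod_i(\text{positive integer})$, the per-colour ``error'' coming from the at most $n(\alpha_i+\beta_i)$ cells of $\mathcal F_i$ lying in a bounded family of distinguished rows and columns. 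I would also remark that the tempting shortcut of enlarging $[n]$ to a prime-sized box via Bertrand's postulate and padding the families does not obviously work, because a rainbow matching of the padded families need not lie inside the original $[n]\times[n]$ box, so a direct re-derivation as above seems necessary. Finally, the case $s=2$ is a useful sanity check: there the claim reduces to the elementary fact that more than $n$ cells in $[n]^2$ cannot pairwise share a row or column, hence contain two disjoint cells, from which a rainbow matching quickly follows.
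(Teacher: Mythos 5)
Your proposal has a genuine gap: you set up a coefficient-extraction framework over $[n]^{2s}\subseteq\mathbb Q^{2s}$, correctly identify that the power sums $\sum_{c=0}^{n-1}c^j$ no longer vanish and that the ``dominant term'' argument from the prime case no longer collapses for free, and then leave precisely that nonvanishing step unresolved, describing it as ``the work.'' That step is not a bookkeeping afterthought; it is the entire theorem, and it is not at all clear your mixed-determinant expansion can be forced nonzero from $|\mathcal F_i|>n(a_i+b_i)$ alone by tracking ``distinguished rows and columns.'' Indeed, the sets $\mathcal F_i$ are arbitrary subsets of $[n]^2$ with no structure other than a size bound, so the per-colour matrices $M_i$ are essentially arbitrary, and a signed sum over $\gamma,\delta\in S_s$ of products of their entries has no obvious reason to avoid cancellation.

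The paper sidesteps this entirely with a different tool. Rather than reducing modulo the box ideal $(\prod_c(x_i-c),\prod_c(y_i-c))$ and extracting a coefficient by hand, it invokes the \emph{multivariate} combinatorial nullstellensatz of Do\u gan, Er\"gur, Mundo, and Tsigaridas (\Cref{thm:multivar_combonull}), which applies to the polynomial $f=\prod_{i<j}(x_i-x_j)(y_i-y_j)$ with variables grouped into pairs $(x_i,y_i)$ and sets $S_i=\mathcal F_i\subseteq\mathbb Q^2$ that need not be product sets. The hypothesis it requires is a lower bound on $\deg\mathcal F_i$ (the minimum degree of a nonzero polynomial vanishing on $\mathcal F_i$), and this is exactly where the size bound enters: by Schwartz--Zippel, any $g$ of degree $d$ vanishing on $\mathcal F_i\subseteq[n]^2$ forces $|\mathcal F_i|\le dn$, so $|\mathcal F_i|>n(a_i+b_i)$ immediately gives $\deg\mathcal F_i>a_i+b_i$. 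The Vandermonde expansion then shows the coefficient of $\prod_i x_i^{a_i}y_i^{b_i}$ in $f$ is $\pm1$, and the multivariate nullstellensatz produces a nonvanishing point, i.e., a rainbow matching. In short, the missing ingredient in your approach is replaced by a ready-made black box whose hypothesis is verified by a one-line Schwartz--Zippel argument; no explicit coefficient computation over $\mathbb Q$ is needed, and the notion of $\deg S$ for non-product sets $S\subseteq\mathbb Q^2$ is exactly what lets the bound $|\mathcal F_i|>n(a_i+b_i)$ do the work without any structure on $\mathcal F_i$.

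Your remark that padding to a prime-sized box via Bertrand does not obviously work is a good observation and consistent with the paper's choice to avoid reduction to the prime case. Your $s=2$ sanity check is fine but does not exercise the hard part of the argument.
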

In comparison, the following result only works when $n$ is prime:
\begin{theorem}[{\cite[Thm.~3]{satisfying_sequence}}]
\label{thm:old_combonull}
Let $k=2$, $n=p$ be prime, and $e_1,\dots,e_s$ be a sequence 
of nonnegative integers such that the coefficient of 
$x_1^{e_1}\cdots x_s^{e_s}$ in
$\prod_{1\leq i<j\leq s}(x_i-x_j)^2$
is nonzero modulo $p$.
Then the sequence $f_i = ne_i$ is satisfying.
\end{theorem}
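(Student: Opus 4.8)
The plan is to deduce the statement from the Combinatorial Nullstellensatz after ``slicing'' each $\mathcal{F}_i$ along lines of a fixed nonvertical slope. Identify $[n]$ with $\mathbb{F}_p$. For $\beta\in\mathbb{F}_p$ let $\ell_\beta=\{(a,\,a+\beta):a\in\mathbb{F}_p\}$; as $\beta$ runs over $\mathbb{F}_p$ these $n$ lines partition $\mathbb{F}_p^2$, so $\sum_{\beta\in\mathbb{F}_p}|\mathcal{F}_i\cap\ell_\beta|=|\mathcal{F}_i|>ne_i$ for every $i$, and by averaging there is a shift $\beta_i\in\mathbb{F}_p$ with $|\mathcal{F}_i\cap\ell_{\beta_i}|\ge e_i+1$. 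Set $A_i:=\{a\in\mathbb{F}_p:(a,a+\beta_i)\in\mathcal{F}_i\}$, so that $|A_i|\ge e_i+1$ and each $a\in A_i$ already names an admissible tuple $(a,a+\beta_i)\in\mathcal{F}_i$ for color $i$. The effect of this slicing is to turn the two–dimensional condition ``$(a_i,b_i)\in\mathcal{F}_i$'' into the one–dimensional condition ``$a_i\in A_i$''.

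Next I would apply the Combinatorial Nullstellensatz over $\mathbb{F}_p$, with the sets $A_1,\dots,A_s$, to
\[
  \Psi(x_1,\dots,x_s)\;:=\;\prod_{1\le i<j\le s}(x_i-x_j)\;\cdot\;\prod_{1\le i<j\le s}\bigl((x_i-x_j)+(\beta_i-\beta_j)\bigr).
\]
Each factor of the second product is linear with $x_i$–coefficient $1$, so extracting the degree-one part of every factor recovers $\prod_{i<j}(x_i-x_j)$; hence the top-degree homogeneous component of $\Psi$ equals $\prod_{i<j}(x_i-x_j)^2$. Since $\prod_{i<j}(x_i-x_j)^2$ is homogeneous of degree $s(s-1)$, we get $\deg\Psi=s(s-1)$, and the nonvanishing hypothesis forces $\sum_i e_i=s(s-1)$; therefore the monomial $x_1^{e_1}\cdots x_s^{e_s}$ has full degree in $\Psi$, and its coefficient in $\Psi$ equals its coefficient in $\prod_{i<j}(x_i-x_j)^2$, which is nonzero modulo $p$ by hypothesis. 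As $|A_i|\ge e_i+1>e_i$ for all $i$, the Combinatorial Nullstellensatz gives $(a_1,\dots,a_s)\in A_1\times\cdots\times A_s$ with $\Psi(a_1,\dots,a_s)\ne0$.

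Finally I would unwind $\Psi(a_1,\dots,a_s)\ne0$: the first product is nonzero, so the rows $a_1,\dots,a_s$ are pairwise distinct, and the second product is nonzero, so the values $a_1+\beta_1,\dots,a_s+\beta_s$ are pairwise distinct; combined with $(a_i,a_i+\beta_i)\in\mathcal{F}_i$ (from $a_i\in A_i$), the tuples $(a_i,a_i+\beta_i)$, $i\in[s]$, form a rainbow matching. The only substantive idea is the slicing in the first paragraph: it is exactly what makes ``$\mathcal{F}_i$ large'' turn into ``$|A_i|\ge e_i+1$'' while turning ``distinct second coordinates'' into the shifted statement ``$a_i+\beta_i$ pairwise distinct'', so that the polynomial one feeds to the Combinatorial Nullstellensatz has leading form $\prod_{i<j}(x_i-x_j)^2$ and the hypothesis is precisely the nonvanishing of the coefficient one needs. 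The rest is routine; the only things to verify are a few harmless boundary points ($e_i+1\le|A_i|\le n$ is consistent since $|\mathcal{F}_i|\le n^2$ gives $e_i<n$; equal shifts $\beta_i=\beta_j$ cause no trouble, the corresponding factor just being $x_i-x_j$), and one should note that primality of $n$ is used both to invoke the Combinatorial Nullstellensatz over the field $\mathbb{F}_p$ and to know that the slope-$1$ lines split $\mathbb{F}_p^2$ into exactly $n$ classes.
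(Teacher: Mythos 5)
Your proof is correct, but it takes a genuinely different route from the paper. The paper does not prove \Cref{thm:old_combonull} directly: it derives it as a special case of \Cref{thm:polynomial_method}, which is proved over $\mathbb{Q}$ by combining the Schwartz--Zippel lemma (to show $\deg\calF_i>a_i+b_i$ from $|\calF_i|>n(a_i+b_i)$) with the \emph{multivariate} combinatorial nullstellensatz of Do\u gan--Er\"gur--Mundo--Tsigaridas, in which the grid $S_1\times\dots\times S_m$ is replaced by arbitrary high-degree subsets of $\mathbb{Q}^2$. You instead work over $\mathbb{F}_p$ and use the slicing trick --- partitioning $\mathbb{F}_p^2$ into the $n$ lines $\ell_\beta$ and averaging to extract $A_i\subseteq\mathbb{F}_p$ with $|A_i|\ge e_i+1$ --- to collapse the two-dimensional membership condition to a one-dimensional one, after which Alon's classical nullstellensatz applies to $\Psi=\prod_{i<j}(x_i-x_j)\prod_{i<j}\bigl((x_i-x_j)+(\beta_i-\beta_j)\bigr)$, whose top homogeneous part is exactly $\prod_{i<j}(x_i-x_j)^2$. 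Your argument is more elementary (no multivariate nullstellensatz, no degree-of-a-variety bookkeeping) and uses the mod-$p$ hypothesis exactly where it is needed, but it relies essentially on $n$ being prime for both the field structure and the line partition; the paper's heavier route is what lets it drop primality and the mod-$p$ condition entirely in \Cref{thm:polynomial_method}. All the steps you flag as routine do check out: the hypothesis forces $\sum_i e_i=s(s-1)$ by homogeneity, $|\calF_i|\le n^2$ forces $e_i<n$ so the averaging is consistent, and coincident shifts $\beta_i=\beta_j$ cause no degeneration of the leading form.
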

Note that \Cref{thm:old_combonull} is a special case of 
\Cref{thm:polynomial_method}.
In particular, by considering Laplace expansion 
of the Vandermonde determinant, we can see that 
$\prod_{1\leq i<j\leq s} (x_i-x_j)$
is a linear combination of monomials of the form 
$x_1^{a_1}\cdots x_s^{a_s}$, where $(a_1,\dots,a_s)$
is a permutation of $\{0,1,\dots,s-1\}$.
Thus, if a monomial $x_1^{e_1}\cdots x_s^{e_s}$
has a nonzero coefficient in $\prod_{1\leq i<j\leq s}(x_i-x_j)^2$,
then $e_i = a_i+b_i$, for some permutations $(a_1,\dots,a_s)$
and $(b_1,\dots,b_s)$ of $\{0,1,\dots,s-1\}$.
Furthermore, \Cref{thm:old_combonull} only works when 
$n=p$ is prime and require the coefficient to be 
nonzero modulo $p$ (instead of nonzero).

\Cref{thm:polynomial_method} implies that 
the sequence $(s-1)n$, \dots, $(s-1)n$ and 
the sequence $0$, $2n$, $4n$, \dots, $2(s-1)n$ are 
both satisfying.
Note that these two sequences are far from optimal,
especially when $s$ is very small 
since we know from \cite[Thm.~9]{satisfying_sequence} that
the sequence $c, n+c, 2n+c,\dots, (s-1)n+c$
is satisfying when $c\geq 4s^2$.%
\footnote{When $k=2$, there is no term $2^{15}s^3\log_2(ks)^3n^{k-3}$
because one can skip the spread approximation step 
and proceed directly to the elementary argument after Lemma 10.}
The key input for \Cref{thm:polynomial_method} is the 
multivariate generalization \cite{multivar_null}
of Alon's combinatorial nullstellensatz \cite{combonull}.

\subsection{Outline}
We prove \Cref{thm:with_k} in \Cref{sec:with_k}
and prove \Cref{thm:without_k} in \Cref{sec:without_k}.
The proofs of these two theorems are similar,
except that in the proof of \Cref{thm:without_k}, 
one replaces an elementary argument with
the method of spread approximation to eliminate 
the factor of $k$ in the bound.
We prove \Cref{thm:polynomial_method} in \Cref{sec:polynomial_method}.

\subsection*{Acknowledgements}
This research was conducted at 
the University of Minnesota Duluth REU 
with support from Jane Street Capital, NSF Grant 2409861,
and donations from Ray Sidney and Eric Wepsic.
We thank Colin Defant and Joe Gallian for such 
a wonderful opportunity.
We also thank Evan Chen, Noah Kravitz, Rupert Li, Maya Sankar,
and Daniel Zhu for helpful discussions and feedback on this paper.

\section{Preliminaries}
\subsection{Notation}
\label{subsec:notation}
We let $[n] \coloneq \{1,2,\dots,n\}$ denote the standard 
$n$-element set. 
For any set $X$, we let $\binom{X}k$ 
be the set of all $k$-element subsets of $X$,
and let $X^k$ be the set of all $k$-tuples
whose elements are in $X$.

Given multisets $A$ and $B$,
their sum $A\oplus B$ is the multiset such that  
for any element $x$ appearing in $A$ and $B$
$a$ and $b$ times, respectively,
$x$ appears in $A\oplus B$ exactly $a+b$ times.

In \Cref{sec:with_k} and \Cref{sec:without_k},
it is helpful to view a tuple in
$[n]^k$ as a $k$-element subset of $[k]\times [n]$,
where tuple $(a_1,\dots,a_k)$ corresponds to 
$\{(1,a_1), \dots, (k,a_k)\}$.
Let $\calT_{n,k}$ denote the set of all $n^k$ subsets 
of this form.
Under the aforementioned correspondence between $\calT_{n,k}$ and $[n]^k$,
disjoint tuples in $[n]^k$ correspond to
disjoint sets in $\calT_{n,k}$.
Thus, a matching in $\calF_1,\dots,\calF_s\subseteq \calT_{n,k}$
is a list of $s$ pairwise disjoint sets $B_1\in\calF_1$, \dots,
$B_s\in\calF_s$.
\subsection{Concentration in Random Matchings}
The key input to both \Cref{thm:with_k}
and \Cref{thm:without_k} is concentration of the intersection 
of random matchings with a fixed set $\calG\subseteq [n]^k$.

A \vocab{matching} in $\calT_{n,k}$ is a collection of pairwise 
disjoint elements of $\calT_{n,k}$.
A matching is \vocab{perfect} if and only if 
it contains $n$ elements.
We consider a uniformly random perfect matching in $\calT_{n,k}$.
Using martingale concentration inequalities, 
Kiselev and Kupavskii \cite{rainbow_matching}
proved the following concentration result.
\begin{theorem}[{Concentration of Random Matching,
    \cite[Thm.\ 6]{rainbow_matching}}]
\label{thm:concentration_matching}
Let $\calM$ be a uniformly random perfect matching in $\calT_{n,k}$.
Let $\calG \subset [n]^k$ be a subset with 
$|\calG| = \alpha n^k$. Then for any $\lambda>0$, we have 
\begin{align*}
\mathbb P\Big(|\calG\cap\calM| \geq \alpha n + 2\lambda\Big)
&\leq 2\exp\left(-\frac{\lambda^2}{\alpha n/2 + 2\lambda}\right)
\quad\text{and} \\
\mathbb P\Big(|\calG\cap\calM| \leq \alpha n - 2\lambda\Big)
&\leq 2\exp\left(-\frac{\lambda^2}{\alpha n/2 + 2\lambda}\right).
\end{align*}
\end{theorem}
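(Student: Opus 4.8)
The plan is to prove \Cref{thm:concentration_matching} by a Doob martingale on a sequential exposure of $\calM$, controlled by a Bernstein-type (Freedman) inequality for martingales with bounded increments and bounded predictable quadratic variation. The variance proxy $\alpha n$ is exactly what one should expect: the $n$ edges of $\calM$ each land in $\calG$ with probability $\approx\alpha$, and a sum of $n$ independent $\mathrm{Bernoulli}(\alpha)$ variables has variance at most $\alpha n$.

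Under the identification of $\calT_{n,k}$ with $[n]^k$, a uniformly random perfect matching is the set $\calM=\{(j,\pi_2(j),\dots,\pi_k(j)):j\in[n]\}$ for independent uniformly random permutations $\pi_2,\dots,\pi_k$ of $[n]$. I would reveal $\calM$ one edge at a time through $M_1,\dots,M_n$, where $M_j$ is the edge with first coordinate $j$, and set $\calF_j=\sigma(M_1,\dots,M_j)$; at step $j$ one reveals $(\pi_2(j),\dots,\pi_k(j))$, which is uniform over the still-available values in each coordinate, independently across coordinates. Writing $X:=|\calG\cap\calM|=\sum_{j=1}^n Y_j$ with $Y_j:=\mathbf 1[M_j\in\calG]$, the residual matching on the slots $\ge j$ is again uniform over residual value sets $A_\ell^{(j-1)}$ of common size $m:=n-j+1$, so $\mathbb E[Y_j\mid\calF_{j-1}]=|\calG\cap(\{j\}\times A_2^{(j-1)}\times\cdots\times A_k^{(j-1)})|/m^{k-1}$; summing over the residual slots and telescoping down to $j=1$ gives $\mathbb E X=|\calG|/n^{k-1}=\alpha n$. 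Then $X_j:=\mathbb E[X\mid\calF_j]$ defines a martingale from $X_0=\alpha n$ to $X_n=X$, with increments $D_j:=X_j-X_{j-1}$.

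The two quantitative inputs are these. For bounded increments, write $D_j=\big(Y_j-\mathbb E[Y_j\mid\calF_{j-1}]\big)+\big(\mathbb E[\sum_{i>j}Y_i\mid\calF_j]-\mathbb E[\sum_{i>j}Y_i\mid\calF_{j-1}]\big)$: the first bracket lies in $[-1,1]$, and for the second one compares the residual matching on the slots $>j$ before and after revealing $M_j$ — the two laws differ only in that the value $M_j[\ell]$ leaves the available set of coordinate $\ell$ — so the conditionally expected number of residual $\calG$-edges shifts by at most an absolute constant, giving $|D_j|\le C$ for an absolute $C$. For the predictable quadratic variation, $\mathbb E[D_j^2\mid\calF_{j-1}]$ is dominated by $\mathbb E[Y_j\mid\calF_{j-1}]$ up to the contribution of the residual-correction term, and one shows $\sum_j\mathbb E[D_j^2\mid\calF_{j-1}]\le V$ with $V=O(\alpha n)$, using $\mathbb E[Y_j\mid\calF_{j-1}]\le 1$ together with $\sum_j|\calG\cap(\{j\}\times[n]^{k-1})|=|\calG|=\alpha n^k$. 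Feeding $C$ and $V$ into Freedman's inequality and substituting $t=2\lambda$ yields the claimed upper-tail bound; since $(-D_j)$ has the same increment bound and the same predictable quadratic variation, applying Freedman's inequality to $-(X_j)$ gives the matching lower-tail bound with identical constants.

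The main obstacle is the predictable quadratic variation estimate, and in particular verifying that the residual-correction term of $D_j$ — which can be a full constant in the worst case — contributes only lower order to $\mathbb E[D_j^2\mid\calF_{j-1}]$, so that the sum stays $O(\alpha n)$ rather than $O(n)$; this must be checked for all slots $j$ (including the last few, where $m$ is small) and all $\calG$, and with constants that do not degrade with $k$, since the final bound carries no dependence on $k$. Everything else is standard bookkeeping around Freedman's inequality, and the resulting estimate is the martingale argument of Kiselev and Kupavskii.
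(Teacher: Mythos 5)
This statement is not proved in the paper at all: it is quoted as an external result from Kiselev and Kupavskii \cite{rainbow_matching}, so there is no internal proof to compare against. Your overall strategy (an edge-exposure Doob martingale for $|\calG\cap\calM|$ controlled by a Freedman/Bernstein inequality) is the same kind of martingale argument the paper attributes to that source, so the plan is reasonable in spirit.

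There is, however, a concrete gap in the one step you treat as routine: the claim that $|D_j|\le C$ for an \emph{absolute} constant $C$ is false. Take $\calG=\{(x_1,\dots,x_k):x_\ell=1\text{ for some }\ell\ge 2\}$, so $\alpha n=n\bigl(1-(1-1/n)^{k-1}\bigr)\approx k-1$ and $|\calG\cap\calM|$ is the number of distinct slots among $\pi_2^{-1}(1),\dots,\pi_k^{-1}(1)$. On the positive-probability event $\pi_2(1)=\dots=\pi_k(1)=1$, revealing $M_1$ forces $\mathbb E[X\mid\calF_1]=1$, while $X_0=\mathbb E X\approx k-1$, so $|D_1|$ can be of order $k$. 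The failure is exactly in your residual-correction heuristic: deleting one value from each of the $k-1$ coordinate pools can decrease the expected number of future $\calG$-edges by roughly $1$ \emph{per coordinate}, hence by $\Theta(k)$ in total. Since Freedman's denominator carries the worst-case increment bound, this route yields a tail of the form $\exp\bigl(-\lambda^2/(\alpha n/2+\Theta(k)\lambda)\bigr)$, which is strictly weaker than the stated $k$-free bound precisely in the large-deviation regime where the linear term dominates; note that \Cref{cor:concentration_matching} and the proof of \Cref{thm:without_k} do use that regime, so the $k$-dependence is not harmless. Repairing this needs either a finer filtration (e.g.\ revealing one permutation value at a time, where a transposition coupling does give increments bounded by $2$, at the cost of re-doing the quadratic-variation bound over $n(k-1)$ steps) or a separate treatment of the rare large increments. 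Combined with the fact that you explicitly defer the bound $\sum_j\mathbb E[D_j^2\mid\calF_{j-1}]=O(\alpha n)$ — the other hypothesis of Freedman's inequality — and never derive the specific constants $\alpha n/2+2\lambda$ and the prefactor $2$, the proposal as written is an outline with an incorrect key estimate rather than a proof.
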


We restate the theorem into the following more useful form.
\begin{corollary}
\label{cor:concentration_matching}
Let $\calM$ be a uniformly random matching in $\calT_{n,k}$.
Let $\calG\subset [n]^k$. Then for any $m>0$, we have 
\begin{align*}
\mathbb P\left(|\calG\cap\calM| \geq \frac{|\calG|}{n^{k-1}} 
+ \max\left(2\sqrt{\frac{|\calG|\log(2m)}{n^{k-1}}}, 
8\log(2m)\right) \right)
&< \frac 1m\quad\text{and} \\
\mathbb P\left(|\calG\cap\calM| \leq \frac{|\calG|}{n^{k-1}} 
- \max\left(2\sqrt{\frac{|\calG|\log(2m)}{n^{k-1}}}, 
8\log(2m)\right) \right) & < \frac 1m.
\end{align*}
\end{corollary}
\begin{proof}
Select $\lambda=\max\Big( 
    \sqrt{\frac{|\calG|\log(2m)}{n^{k-1}}}, 4\log(2m)\Big)$.
We apply \Cref{thm:concentration_matching},
and the result follows from 
\begin{align*}
2\exp\left(-\frac{\lambda^2}{\alpha n/2 + 2\lambda}\right)
&\leq 2\exp\left(-\frac{\lambda^2}{2\max(\alpha n/2, 2\lambda)}\right)
\\
&= 2\exp\left(-\min\left(\frac{\lambda^2}{\alpha n}
, \frac{\lambda}4\right)\right) \leq \frac 1m,
\end{align*}
where the last inequality follows from 
$\lambda^2/\alpha n \geq \log(2m)$
and $ \lambda/4\geq \log(2m)$.
\end{proof}

We note the following corollary, which applies to 
multisets $\calG$.
\begin{corollary}
\label{cor:matching_multiset}
Let $\calM$ be a uniformly random perfect matching in $[n]^k$.
Let $\calG$ be a multiset of elements in $[n]^k$ 
such that each element appears in $\calG$ at most $t$ times.
Then for any $m>0$, we have 
\begin{align*}
\mathbb P\left(|\calG\cap\calM| \geq \frac{|\calG|}{n^{k-1}} 
+ \max\left(2t\sqrt{\frac{|\calG|\log(2tm)}{n^{k-1}}}, 
8t\log(2tm)\right) \right)
&\leq \frac{1}{m} \quad\text{and} \\
\mathbb P\left(|\calG\cap\calM| \leq \frac{|\calG|}{n^{k-1}} 
- \max\left(2t\sqrt{\frac{|\calG|\log(2tm)}{n^{k-1}}}, 
8t\log(2tm)\right) \right)
&\leq \frac{1}{m},
\end{align*}

(Here, $|\calG|$ and $|\calG\cap\calM|$ 
counts the size of $\calG$ and $\calG\cap M$ 
with respect to multiplicities of $\calG$.)
\end{corollary}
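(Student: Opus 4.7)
The plan is to reduce the multiset case to the ordinary-subset case by slicing $\calG$ into level sets, applying \Cref{cor:concentration_matching} to each slice, and recombining via a union bound and Cauchy--Schwarz. Writing $m_F\in\{0,1,\dots,t\}$ for the multiplicity of $F\in [n]^k$ in $\calG$, I would set
$$\calG_j \coloneq \{F\in[n]^k : m_F\geq j\}\subseteq[n]^k, \qquad j=1,\dots,t,$$
so that $\calG=\calG_1\oplus\cdots\oplus\calG_t$. This gives $|\calG|=\sum_j|\calG_j|$, and since the random perfect matching $\calM$ is a set (it contains each tuple at most once), one has $|\calG\cap\calM|=\sum_{j=1}^t|\calG_j\cap\calM|$, with the left-hand side counted with multiplicity in $\calG$.

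Next I would apply \Cref{cor:concentration_matching} to each $\calG_j$ individually, with the parameter $m$ there replaced by $tm$. Each application fails with probability at most $1/(tm)$, so a union bound over $j\in\{1,\dots,t\}$ produces a good event of probability at least $1-1/m$ on which the per-layer bound
$$|\calG_j\cap\calM|-\frac{|\calG_j|}{n^{k-1}}\leq \max\left(2\sqrt{\frac{|\calG_j|\log(2tm)}{n^{k-1}}},\;8\log(2tm)\right)$$
holds simultaneously for every $j$.

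On this good event I would sum the $t$ per-layer inequalities. Bounding $\max(a,b)\leq a+b$ and applying Cauchy--Schwarz through $\sum_j\sqrt{|\calG_j|}\leq\sqrt{t\,|\calG|}$ controls the square-root contributions by $2\sqrt{t\,|\calG|\log(2tm)/n^{k-1}}$, while the logarithmic contributions total at most $8t\log(2tm)$. Repackaging via $a+b\leq 2\max(a,b)$ together with $\sqrt{t}\leq t$ then yields the stated inequality. The lower-deviation case follows identically from the symmetric half of \Cref{cor:concentration_matching}.

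The argument is entirely mechanical and needs no new probabilistic input beyond the previous corollary. The only delicate step is the constant bookkeeping, and the factor of $t$ (rather than $\sqrt{t}$) appearing in front of the square-root term is precisely the slack that absorbs both the Cauchy--Schwarz step and the additive-to-max conversion, so there is no real obstacle once the layered decomposition is in place.
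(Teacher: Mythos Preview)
Your decomposition into level sets $\calG_j$ and the union bound with parameter $tm$ in each application of \Cref{cor:concentration_matching} is exactly the paper's argument. However, your final recombination step does not recover the stated constants. After bounding $\max(a,b)\leq a+b$, applying Cauchy--Schwarz, and summing, you arrive at
\[
2\sqrt{\tfrac{t\,|\calG|\log(2tm)}{n^{k-1}}}+8t\log(2tm);
\]
reconverting via $a+b\leq 2\max(a,b)$ then gives
\[
\max\!\Bigl(4\sqrt{\tfrac{t\,|\calG|\log(2tm)}{n^{k-1}}},\;16t\log(2tm)\Bigr),
\]
and the second entry already exceeds the target $8t\log(2tm)$ by a factor of $2$. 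The appeal to $\sqrt t\leq t$ does not help: it addresses only the first entry, and even there one would need $4\sqrt t\leq 2t$, which fails for $t\in\{1,2,3\}$.

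The fix is to avoid splitting the max at all. Since $|\calG_j|\leq|\calG|$ for every $j$, each per-layer deviation is already bounded by the single quantity
\[
\max\!\Bigl(2\sqrt{\tfrac{|\calG|\log(2tm)}{n^{k-1}}},\;8\log(2tm)\Bigr),
\]
and summing $t$ copies of this gives exactly $\max\bigl(2t\sqrt{|\calG|\log(2tm)/n^{k-1}},\,8t\log(2tm)\bigr)$ with no loss. This is what the paper has in mind, and it makes the Cauchy--Schwarz detour unnecessary.
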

\begin{proof}
Split $\calG$ into the sum of $t$ sets 
$\calG=\calG_1+\dots+\calG_t$
where $\calG_1,\dots,\calG_t \subseteq [n]^k$.
The conclusion then follows from applying 
the previous theorem on each of the $\calG_i$
(with $m$ replaced by $mt$)
and using a union bound. 
\end{proof}

\section{Shifting Argument: Proof of \texorpdfstring{\Cref{thm:with_k}}
{Theorem \ref*{thm:with_k}}}
\label{sec:with_k}
In this section, we prove \Cref{thm:with_k}.
To do this, we use a shifting argument
to simplify the structure of $\calF_1,\dots,\calF_s$,
which allows us to have better control when picking random matchings.
As explained in \Cref{subsec:notation},
we view $\calF_1,\dots,\calF_s$ as subsets of $\calT_{n,k}$.

\subsection{Shifting Argument}
Shifting is a technique in extremal set theory
that has been used to prove classical results
such as the Erd\H os-Ko-Rado theorem or the Kruskal-Katona theorem
(see \cite{shifting} for a survey on this technique).
It was used by Huang, Loh, and Sudakov 
\cite{rainbow_emc} to study the rainbow Erd\H os matching conjecture.

Suppose that $\calF_1,\dots,\calF_s\subseteq \calT_{n,k}$.
For each $j\in [k]$ and $a,b\in [n]$,
we define the \vocab{shift map} $S_{j,a,b}$,
which takes a subset $\calF\subseteq \calT_{n,k}$
to another subset of $\calT_{n,k}$
obtained by replacing $(j,b)$ in each element of 
$\calF$ by $(j,a)$ 
whenever possible.
More precisely, for each $F\in\calF$, define
$$S_{j,a,b}(F) = \begin{cases}
F \setminus \{(j,b)\} \cup \{(j,a)\} 
& \text{if } (j,b)\in F \text{ and } 
(F \setminus \{(j,b)\} \cup \{(j,a)\}) \notin\calF \\
F & \text{otherwise}
\end{cases}$$
Then we have
$$S_{j,a,b}(\calF) = \{S_{j,a,b}(F) : F\in\calF\}.$$
The key property of the shift map is 
the following.
\begin{proposition}
If $\calF_1,\dots,\calF_s$ has no rainbow matching, 
then $S_{j,a,b}(\calF_1), \dots, S_{j,a,b}(\calF_s)$
also has no rainbow matching.
\end{proposition}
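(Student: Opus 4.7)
The plan is to prove the contrapositive: given a rainbow matching $G_1,\dots,G_s$ with $G_i\in S_{j,a,b}(\calF_i)$, produce a rainbow matching $F_1,\dots,F_s$ in $\calF_1,\dots,\calF_s$. We may assume $a\neq b$, since otherwise $S_{j,a,b}$ is the identity. For each $i$, I unfold the definition of the shift into two cases: either $G_i\in\calF_i$, and I set $F_i:=G_i$; or $G_i\notin\calF_i$, in which case $G_i$ must contain $(j,a)$ and arise as $H\setminus\{(j,b)\}\cup\{(j,a)\}$ for the unique preimage $H=G_i\setminus\{(j,a)\}\cup\{(j,b)\}\in\calF_i$, and I set $F_i:=H$. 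Let $T$ denote the set of indices in the second case.

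The first observation is that $|T|\le 1$: every $i\in T$ satisfies $(j,a)\in G_i$, and since the $G_i$ are pairwise disjoint, at most one can contain $(j,a)$. If $T=\emptyset$, then each $F_i=G_i\in\calF_i$ and the $F_i$ are already pairwise disjoint, finishing the proof.

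If instead $T=\{i_0\}$, then $F_{i_0}$ differs from $G_{i_0}$ only at coordinate $j$, where the label $(j,a)\in G_{i_0}$ is replaced by $(j,b)\in F_{i_0}$. By disjointness of the $G_i$, at most one index $i_1\neq i_0$ can have $(j,b)\in G_{i_1}$; if no such $i_1$ exists, then $(F_1,\dots,F_s)$ is already a rainbow matching. Otherwise, the key step invokes the shift rule twice: since $G_{i_1}\in S_{j,a,b}(\calF_{i_1})$ contains $(j,b)$, the shift cannot have activated on any preimage of $G_{i_1}$ (that would force $(j,a)\in G_{i_1}$), so $G_{i_1}\in\calF_{i_1}$; and for the shift to have left $G_{i_1}$ fixed despite $(j,b)\in G_{i_1}$, we must have $G_{i_1}\setminus\{(j,b)\}\cup\{(j,a)\}\in\calF_{i_1}$. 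I then replace $F_{i_1}=G_{i_1}$ by this new set. After the swap, $F_{i_0}$ and the modified $F_{i_1}$ carry $(j,b)$ and $(j,a)$ respectively, while the remaining $F_i=G_i$ contain neither $(j,a)$ nor $(j,b)$ at coordinate $j$ by disjointness of the $G_i$; at all coordinates other than $j$, each modified set agrees with the corresponding $G_i$, so pairwise disjointness persists.

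The main obstacle is purely the bookkeeping in the swap step: I have to track exactly where $(j,a)$ and $(j,b)$ appear in the modified tuples and verify pairwise disjointness at coordinate $j$ separately from the remaining coordinates. There is no genuine mathematical difficulty beyond this careful case analysis of whether the shift activates on each $F_i$.
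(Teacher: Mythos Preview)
Your proof is correct and follows essentially the same approach as the paper's: argue by contrapositive, observe that at most one index $i_0$ can have been genuinely shifted (since $(j,a)$ appears in at most one $G_i$), and in the conflicting case swap $(j,a)$ and $(j,b)$ at the unique index $i_1$ with $(j,b)\in G_{i_1}$, using the fact that the shift fixed $G_{i_1}$ to conclude $G_{i_1}\setminus\{(j,b)\}\cup\{(j,a)\}\in\calF_{i_1}$. The only difference is cosmetic bookkeeping and notation.
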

\begin{proof}
Assume for the sake of contradiction that there is a matching 
$B_1\in S_{j,a,b}(\calF_1)$, \dots, $B_s\in S_{j,a,b}(\calF_s)$.
If $B_i\in\calF_i$ for all $i\in[s]$,
then we have a matching of $\calF_1,\dots,\calF_s$.
Thus, we assume that there exists $i$ such that 
$B_i\notin\calF_i$,
which implies that $(j,a)\in B_i$,
so in particular, $i$ is unique.
Therefore, $B_\ell \in \calF_\ell$
for all $\ell\neq i$.
Moreover, $B_i' = B_i\setminus \{(j,a)\} \cup \{(j,b)\}$
is in $\calF_i$ because it is the element that was shifted 
to $B_i$.

We now proceed in cases on whether $(j,b)\in B_\ell$.
\begin{itemize}
\item If $(j,b)\notin B_\ell$
for all $\ell\in [s]$,
then $B_i'$ is disjoint from $B_\ell$ for all $\ell\neq i$.
Thus, $B_1\in\calF_1$, \dots, $B_i'\in\calF_i$,
\dots, $B_s\in\calF_s$ form a rainbow matching of 
$\calF_1,\dots,\calF_s$, a contradiction.
\item Otherwise, there exists $\ell\in [s]$ 
such that $(j,b)\in B_\ell$,
which must be unique. 
Since $B_\ell$ is in both $\calF_\ell$ 
and $S_{j,a,b}(\calF_\ell)$, it follows that 
$B_\ell' \coloneq B_\ell \setminus \{(j,b)\} \cup \{(j,a)\}$
is in $\calF_\ell$ (because otherwise, $S_{j,a,b}(B_\ell)
= B_\ell'$).
Define $B_m' = B_m$ for all $m\notin\{i,\ell\}$.
Then $B_1'\in\calF_1,\dots,B_s'\in\calF_s$ are pairwise 
disjoint, thus forming a rainbow matching in 
$\calF_1,\dots,\calF_s$, a contradiction.
\qedhere
\end{itemize}
\end{proof}
We apply the following shifting operations
to $\calF_1,\dots,\calF_s$ in the following order:
$$\begin{matrix}
S_{j,1,2}, & S_{j,1,3}, & S_{j,1,4}, & \dots, & S_{j,1,n}, \\
& S_{j,2,3}, & S_{j,2,4}, & \dots, & S_{j,2,n}, \\
& & S_{j,3,4}, & \dots, & S_{j,3,n}, \\
& & & \ddots & \vdots \\
& & & & S_{j,n-1,n}.
\end{matrix}$$
In the resulting sets, we have that 
\begin{equation}
\label{eq:shifting_property}
\text{for all }a<b,
\text{ if }F\in\calF_i \text{ then } 
F\setminus\{(j,b)\}\cup\{(j,a)\}\in \calF_i.
\end{equation}
This property is preserved when we apply the shift map 
$S_{j,a,b}$ for all $j\neq 1$.
Thus, we do this shifting sequence for all 
$j\in [k]$ in arbitrary order 
to get that \eqref{eq:shifting_property} holds for all $j$.

\subsection{Proof of \texorpdfstring{\Cref{thm:with_k}}
{Theorem \ref*{thm:with_k}}}
We now prove \Cref{thm:with_k}, which we reproduce below.
\begin{theorem*}
If $n> s$ and
$$c\geq n^{k-1} + \max\left(k^2sn^{k-\frac 32}\sqrt{8\log(2ks)}, 
8kn^{k-1}\log(2ks)\right),$$
then the sequence $f_i = (i-1)n^{k-1}+c$
is satisfying.
\end{theorem*}

Assume for the sake of contradiction that $\calF_1,\dots,\calF_s\in [n]^k$
with $|\calF_i| > (i-1)n^{k-1}+c$ for all $i$ has no matching.
Take an inclusion-maximal counterexample.
By our shifting argument, we assume that \eqref{eq:shifting_property}
holds for all $j\in [k]$.
\begin{lemma}
\label{lem:low_deg}
For each $i\in [s]$, $j\in [k]$, and $a\in [n]$
such that $a\geq s$, if 
$(j,a)\in F$ for some $F\in\calF_i$,
then $F\setminus \{(j,a)\}\cup\{(j,b)\}\in \calF_i$
for \emph{any} $b\in [n]$.
\end{lemma}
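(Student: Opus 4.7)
The plan is to split into cases based on the relationship between $b$ and $a$. The cases $b=a$ and $b<a$ are immediate: the former is trivial, and the latter is exactly \eqref{eq:shifting_property}. So the real content is the case $b>a$, which I will prove by contradiction using the inclusion-maximality of the counterexample together with the shifting property \eqref{eq:shifting_property}.

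Assume $b > a$ and suppose for contradiction that $F' := F\setminus\{(j,a)\}\cup\{(j,b)\}\notin\calF_i$. By inclusion-maximality of $(\calF_1,\dots,\calF_s)$, enlarging $\calF_i$ by adding $F'$ creates a rainbow matching, which must use $F'$ in position $i$. Thus there exist pairwise disjoint $B_\ell\in\calF_\ell$ for $\ell\neq i$, all disjoint from $F'$. Write $c_\ell$ for the second coordinate of the unique element of $B_\ell$ whose first coordinate is $j$; the values $c_\ell$ for $\ell\neq i$ together with $b$ are $s$ distinct elements of $[n]$. Our goal is to modify this matching so that $F'$ can be replaced by $F$ (whose $j$-th coordinate is $a$).

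If $a\notin\{c_\ell : \ell\neq i\}$, then $F$ together with the $B_\ell$ is already a rainbow matching of the original $\calF_1,\dots,\calF_s$, a contradiction. Otherwise, let $\ell^\ast$ be the unique index with $c_{\ell^\ast}=a$. I will replace $B_{\ell^\ast}$ by a shifted version with a smaller $j$-th coordinate $a'$ chosen to avoid collisions. The forbidden values for $a'$ are $\{a\}\cup\{c_\ell : \ell\in [s]\setminus\{i,\ell^\ast\}\}$, of which at most $s-2$ lie in $\{1,\dots,a-1\}$. Since $a\geq s$, the set $\{1,\dots,a-1\}$ has $a-1\geq s-1$ elements, so there exists $a'<a$ avoiding all forbidden values. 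Applying \eqref{eq:shifting_property} to $B_{\ell^\ast}\in\calF_{\ell^\ast}$ with the pair $a'<a$ yields $B_{\ell^\ast}' := B_{\ell^\ast}\setminus\{(j,a)\}\cup\{(j,a')\}\in\calF_{\ell^\ast}$.

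The remaining verification is that $F,B_1,\dots,B_{\ell^\ast-1},B_{\ell^\ast}',B_{\ell^\ast+1},\dots,B_s$ is a rainbow matching of $\calF_1,\dots,\calF_s$. Outside the $j$-th coordinate, $B_{\ell^\ast}'$ agrees with $B_{\ell^\ast}$ and $F$ agrees with $F'$, so the disjointness inherited from the original matching is preserved. In the $j$-th coordinate, the values are $a, c_1,\dots,\widehat{c_{\ell^\ast}},\dots,c_s, a'$, which are pairwise distinct by the choice of $a'$. This yields the desired contradiction. The main obstacle is precisely the pigeonhole in the $j$-th coordinate: one must verify carefully that the hypothesis $a\geq s$ gives exactly enough room to pick a smaller replacement value $a'$ avoiding all $s-2$ relevant forbidden values, which is what makes the lemma sharp.
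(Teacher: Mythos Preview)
Your proof is correct and follows the same overall strategy as the paper: use inclusion-maximality to produce a near-matching, then apply the shifting property \eqref{eq:shifting_property} together with a pigeonhole argument (enabled by $a\geq s$) to convert it into a genuine rainbow matching of $\calF_1,\dots,\calF_s$.

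The implementation differs in one detail worth noting. The paper modifies the $i$-th slot: it observes that, since $a\geq s$, shifting gives $F\setminus\{(j,a)\}\cup\{(j,b')\}\in\calF_i$ for every $b'\in[s]$, and then picks $b'\in[s]$ avoiding the $s-1$ values occupied by the other $B_\ell$'s. You instead keep $F$ in the $i$-th slot and shift the colliding element $B_{\ell^\ast}$, choosing $a'\in\{1,\dots,a-1\}$ to avoid the $s-2$ remaining coordinates. Both pigeonholes are tight exactly at $a=s$, so neither variant is stronger; the paper's version is marginally cleaner because it avoids the case split on whether $a$ already appears among the $c_\ell$.
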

\begin{proof}
From \eqref{eq:shifting_property}, we note that 
$F\setminus \{(j,a)\}\cup\{(j,b)\}\in \calF_i$ 
for all $b\in [s]$ already.
Now, let 
$$\calF_i' = \calF_i \cup \big\{F\setminus \{(j,a)\}\cup\{(j,b)\}
: b\in [n]\big\}.$$
We claim that $\calF_1,\dots,\calF_i',\dots,\calF_s$
has no matching, which will imply by inclusion-maximality 
of $\calF_1,\dots,\calF_s$ that $\calF_i=\calF_i'$, 
giving the desired conclusion.
Assume for the sake of contradiction 
that there is a matching $B_1\in\calF_1,
\dots, B_i\in\calF_i',\dots, B_s\in\calF_s$.
If $B_i\in\calF_i$, then we automatically get 
a matching of $\calF_1,\dots,\calF_s$, a contradiction.
Thus, assume $B_i\in\calF_i'\setminus\calF_i$,
which means that $B_i = F\setminus\{(j,a)\}\cup \{(j,b)\}$
for some $b\in [n]$.

Select $b'\in [s]$ such that 
$(j,b')\notin B_\ell$ for all $\ell\neq i$.
This must be possible because each $\ell\neq i$
eliminates at most one possible value of $b'$.
Then we replace $B_i$ with 
$B_i' = F\setminus \{(j,a)\}\cup\{(j,b')\}$,
and then $B_1\in \calF_1, \dots, B_i'\in\calF_i,\dots,
B_s\in\calF_s$ form a rainbow matching in 
$\calF_1,\dots,\calF_s$, a contradiction.
\end{proof}

Pick a uniformly random perfect matching $\calM\subseteq \calT_{n,k}$.
We claim that with high probability,
$|\calM\cap\calF_i| \geq i$.
To prove this, for each $(j,a)$, we define 
$$\calH_{j,a} = \{F\in\calT_{n,k} : (j,a)\in F\}.$$
We also let 
$$T = \{(j,a)\in [k]\times [n] : 
\calH_{j,a}\subseteq \calF_i\}.$$
If $(j,a)\in T$ for some $a\geq s$, then \Cref{lem:low_deg}
gives that $(j,a)\in T$ for all $a\in [n]$,
which implies that $\calF_i = [n]^k$,
so \eqref{eq:high_prob} automatically holds.
Hence, we assume that $a<s$ for all $(j,a)\in T$,
so $|T|<ks$.

We now show that most elements of $\calF_i$ 
lie in some $\calH_{j,a}$ for some $(j,a)\in T$.
This follows from the following claim:
\begin{claim}
\label{claim:small_remainder}
Suppose that $\{(1,x_1),\dots,(k,x_k)\}\in\calF_i$ 
is not in $\calH_{j,a}$ for any $(j,a)\in T$.
Then at least two of $x_1,\dots,x_k$ are in $[s-1]$.
\end{claim}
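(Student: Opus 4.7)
The plan is to argue by contradiction, using \Cref{lem:low_deg} to propagate membership in $\calF_i$ across an entire hyperplane. Suppose the conclusion fails, so at most one of $x_1,\dots,x_k$ lies in $[s-1]$; equivalently, at least $k-1$ of them are $\geq s$. Relabeling the coordinates if necessary, assume $x_2,x_3,\dots,x_k\geq s$. I will show that under this assumption, $\calH_{1,x_1}\subseteq \calF_i$, which forces $(1,x_1)\in T$ and contradicts the hypothesis that $F\notin\calH_{j,a}$ for any $(j,a)\in T$ (since $(1,x_1)\in F$).

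To prove $\calH_{1,x_1}\subseteq \calF_i$, I take an arbitrary $G = \{(1,x_1),(2,y_2),\dots,(k,y_k)\}\in\calH_{1,x_1}$ and produce it from $F$ by a chain of single-coordinate substitutions. Starting from $F_1 \coloneq F\in\calF_i$, the coordinate $(2,x_2)$ has $x_2\geq s$, so \Cref{lem:low_deg} applied with $j=2$, $a=x_2$, $b=y_2$ gives
$$F_2 \coloneq F_1\setminus\{(2,x_2)\}\cup\{(2,y_2)\}\in\calF_i.$$
Now $F_2$ still contains $(3,x_3)$ with $x_3\geq s$, so a second application of \Cref{lem:low_deg} yields $F_3\in\calF_i$ with coordinate $3$ replaced by $y_3$. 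Iterating this $k-1$ times produces $F_k = G\in\calF_i$. The iteration is valid because at each step, the coordinate being substituted retains its original value $x_j\geq s$ from $F$ — prior substitutions only touched coordinates $2,\dots,j-1$, leaving $x_j$ intact — so the hypothesis $a\geq s$ of \Cref{lem:low_deg} is satisfied throughout.

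The only delicate point, and what I'd watch out for most carefully, is ensuring the coordinate being replaced at step $j$ really does still satisfy the $\geq s$ hypothesis of \Cref{lem:low_deg}; as noted, this holds because the iteration only overwrites coordinates $2,\dots,j-1$ before step $j$, leaving $(j,x_j)$ unchanged in $F_j$. Apart from that, the proof reduces to bookkeeping: $G$ was arbitrary in $\calH_{1,x_1}$, so $\calH_{1,x_1}\subseteq\calF_i$, hence $(1,x_1)\in T$, contradicting $F\notin\calH_{1,x_1}$-style membership in $T$-hyperplanes. This completes the proof by contradiction and establishes the claim.
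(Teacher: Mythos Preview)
Your proof is correct and follows essentially the same route as the paper's: assume (after relabeling) that all but one coordinate is $\geq s$, then iteratively apply \Cref{lem:low_deg} to replace those coordinates one at a time, thereby showing an entire hyperplane lies in $\calF_i$ and forcing the corresponding pair into $T$. The only cosmetic difference is that the paper fixes coordinate $k$ and varies coordinates $1,\dots,k-1$, whereas you fix coordinate $1$ and vary $2,\dots,k$.
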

\begin{proof}
Without loss of generality, assume that $x_1,\dots,x_{k-1}
\notin [s-1]$. 
By \Cref{lem:low_deg} on $(1,x_1)$, 
it follows that for any $y_1\in [n]$, the set 
$\{(1,y_1), (2,x_2), \dots, (k,x_k)\}$ is in $\calF_i$.
Similarly, applying \Cref{lem:low_deg} on $(2,x_2)$,
\dots, $(k-1, x_{k-1})$ gives that 
for any $y_1,\dots,y_{k-1}\in [n]$, we have 
$\{(1,y_1), \dots, (k-1,y_{k-1}), (k,x_k)\}\in\calF_i$.
Hence, $\calH_{k,x_k}\subseteq \calF_i$.
\end{proof}
Let $\calA = \bigcup_{(j,a)\in T}\calH_{j,a}$.
We split $|\calM\cap\calF_i|$ into two terms:
\begin{equation}
\label{eq:concentration_F}
|\calM\cap\calF_i| = |\calM\cap\calA| + |\calM\cap(\calF_i 
\setminus\calA)|.
\end{equation}
To handle the second term, we note that by \Cref{claim:small_remainder},
we have 
$|\calF_i\setminus\calA| \leq \binom k2 s^2n^{k-2}
< \frac{k^2}2 s^2 n^{k-2}$.
Thus, by our concentration result (\Cref{cor:concentration_matching}),
we get that
\begin{equation}
\label{eq:second_term}
\bbP \left(|\calM \cap (\calF_i\setminus\calA)|
\leq \frac{|\calF_i\setminus\calA|}{n^{k-1}}
- \max\left(ks\sqrt{\frac{2\log(2ks)}{n}}, 8\log(2ks)\right)\right)
< \frac{1}{ks}.
\end{equation}
To handle the first term, let $\calB$ be the multiset such that
$$\calA\oplus \calB = \bigoplus_{(j,a)\in T} \calH_{(j,a)},$$
where all sums are calculated as multisets.
Thus, we get that
\begin{align*}
|\calM\cap\calA| 
&= \sum_{(j,a)\in T} |\calM\cap \calH_{(j,a)}| 
- |\calM\cap\calB| \\
&= |T| - |\calM\cap\calB|.
\end{align*}
Each element of $\calA$ appears in $\calB$ at most 
$k-1$ times.
Moreover, since elements of $\calB$ lies in at least 
two sets of the form $\calH_{j,a}$ and at most $ks$ such sets
(since $|T|<ks$), we get that
$|\calB| \leq \binom{ks}2 n^{k-2} < \frac{k^2s^2}2 n^{k-2}$, 
Therefore, by \Cref{cor:matching_multiset}, we have
$$\bbP \left(|\calM \cap \calB|
\geq \frac{|\calB|}{n^{k-1}}
+ \max\left(k(k-1)s\sqrt{\frac{2\log(2ks)}{n}}, 8(k-1)\log(2ks)\right)\right)
< \frac{k-1}{ks}.$$
Thus, combining the previous two equations and noting that
$|\calA| + |\calB| = n^{k-1}|T|$ gives 
\begin{equation}
\label{eq:first_term}
\bbP \left(|\calM \cap \calA|
\leq \frac{|\calA|}{n^{k-1}}
- \max\left(k(k-1)s\sqrt{\frac{2\log(2ks)}{n}}, 8(k-1)\log(2ks)\right)\right)
< \frac{k-1}{ks}.
\end{equation}
Plugging in \eqref{eq:second_term} and \eqref{eq:first_term}
into \eqref{eq:concentration_F} gives
\begin{equation}
\label{eq:high_prob}
\bbP \left(|\calM \cap \calF_i|
\leq \frac{|\calF_i|}{n^{k-1}}
- \max\left(k^2s\sqrt{\frac{8\log(2ks)}n}, 8k\log(2ks)\right)\right)
< \frac 1s.
\end{equation}
Thus, by the union bound, there exists a matching $\calM$
such that for all $i\in [s]$,
\begin{align*}|\calM\cap\calF_i| 
&\geq 
\frac{|\calF_i|}{n^{k-1}}
- \max\left(k^2s\sqrt{\frac{8\log(2ks)}n}, 8k\log(2ks)\right) \\
&\geq (i-1) + \frac{c}{n^{k-1}}
- \max\left(k^2s\sqrt{\frac{8\log(2ks)}n}, 8k\log(2ks)\right),
\end{align*}
which is at least $i$ by our constraint on $c$.
Therefore, $|\calM\cap\calF_i|\geq i$ for all $i$,
and so we can find a matching of $\calF_1,\dots,\calF_s$
by going through $i=1,2,\dots,s$ in order
and picking an element in $\calM\cap\calF_i$
that has not been picked yet.
\section{Spread Approximation: Proof of 
\texorpdfstring{\Cref{thm:without_k}}{Theorem \ref*{thm:without_k}}}
\label{sec:without_k}
In this section, we prove \Cref{thm:without_k}.
The idea is similar to the proof of \Cref{thm:with_k},
but we require a stronger structural theorem on the sets 
to eliminate the factor of $k$.
As before,
we view $\calF_1,\dots,\calF_s$ as subsets of $\calT_{n,k}$.
\subsection{Spread Approximation}
In order to eliminate the factor of $k$ in the bound
and prove \Cref{thm:without_k},
we use the method of spread approximation,
which was first introduced by Kupavskii and Zakharov
in \cite{spread_approx} and was applied in 
the setting of rainbow matchings in the proof of 
\cite[Thm.~9]{satisfying_sequence}.

The idea of spread approximation is to 
approximate $\calF_i$
by a collection $\calS_i$ of subsets of size at most $2$ 
of an element in $\calF_i$
(i.e., subsets of size at most $2$ of $[k]\times [n]$).
We repeatedly take out small subsets that appear unusually frequently 
(i.e., contained in unusually many elements of $\calF_i$)
until we cannot do that anymore.
We can show
(using the spread lemma, discovered by Alweiss,
Lowett, Wu, and Zhang
\cite{sunflower} and sharpened by Tao in \cite[Prop.~5]{spread_lemma})
that the resulting $\calS_i$ has no matching.

We now explain how spread approximation is applied 
to rainbow matchings as used in
\cite[Thm.~9]{satisfying_sequence}.
To do that, we introduce the following notation
used in \cite{spread_approx}:
for any families $\calF$, $\calS$
and set $X$, we define 
\begin{align*}
\calF[X] &\coloneq \{F\in\calF : X\subseteq F\} \\
\calF(X) &\coloneq \{F\setminus X : F\in\calF, X\subseteq F\} \\
\calF[\calS] &\coloneq \bigcup_{A\in\calS} \calF[A].
\end{align*}

The following result is from \cite[Thm.~9]{satisfying_sequence},
except one additional condition, which we explain below.
\begin{theorem}
\label{thm:spread_approx}
Suppose that $n>2^5s\log_2(sk)$.
Let $\calF_1,\dots,\calF_s\subseteq \calT_{n,k}$
have no rainbow matching.
Then there exist families $\calS_1, \dots, \calS_s$ such that 
\begin{enumerate}[label=(\alph*)]
\item \label{item:elements}
One can write $\calS_i = \calS_i^{(0)}\sqcup 
\calS_i^{(1)}\sqcup\calS_i^{(2)}$
where for $t\in\{0,1,2\}$, each element of $\calS_i^{(t)}$
is a $t$-element subset of $[k]\times [n]$;
\item \label{item:union}
(Small leftover) 
If $\calF_i' = \calF_i\setminus \calT_{n,k}[\calS_i]$,
then $|\calF_i'| \leq 2^{15} s^3 \log_2(sk)^3 n^{k-3}$;
\item \label{item:no_matching}
(No rainbow matching) 
One cannot pick $s$ pairwise disjoint sets
$B_1\in \calS_1$, \dots, $B_s\in\calS_s$;
\item \label{item:small_S1}
We have $|\calS_i^{(1)}| \leq 2(s-1)$ for all $i$;
\item \label{item:small_S2}
We have $|\calS_i^{(2)}| \leq 4(s-1)^2$ for all $i$.
\end{enumerate}
\end{theorem}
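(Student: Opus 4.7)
The plan is to run an iterative spread-approximation procedure separately on each $\calF_i$, producing $\calS_i$ as the collection of extracted ``kernels.'' Initialize $\calG_i \coloneq \calF_i$ and $\calS_i \coloneq \emptyset$, then repeatedly search for a subset $A \subseteq [k]\times[n]$ of size $|A| \leq 2$ whose containment density $|\calG_i[A]|$ in the current leftover exceeds a carefully chosen threshold, calibrated to a spread parameter $R$ of order $s\log_2(sk)$. Whenever such an $A$ is found, add it to $\calS_i^{(|A|)}$ and delete $\calT_{n,k}[A]\cap \calG_i$ from $\calG_i$; when no suitable $A$ remains, set $\calF_i' \coloneq \calG_i$. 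Property (a) is immediate from this stratification by kernel size.

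For the leftover bound (b) on $\calF_i'$, the approach is to apply the sharpened spread lemma of Tao (exactly as used in \cite{satisfying_sequence}) to each $\calF_i'$. By construction $\calF_i'$ admits no small dense kernel, so it is $R$-spread in the appropriate sense. A random subset of $[k]\times[n]$ of the right size then hits $\calF_i'$ with probability $> 1 - 1/s$; combining over the $s$ colors and lifting back to $\calF_i$ would produce a rainbow matching of $\calF_1,\dots,\calF_s$ unless $|\calF_i'| \leq 2^{15}s^3\log_2(sk)^3 n^{k-3}$, which is exactly the claimed bound.

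The key step is (c). Suppose for contradiction that $B_1\in\calS_1,\dots,B_s\in\calS_s$ are pairwise disjoint. The plan is to lift these to elements $F_i \in \calF_i[B_i]$ sequentially for $i = 1, 2, \dots, s$. At step $i$, the lift $F_i$ must avoid the at most $k(i-1)$ coordinates occupied by previously chosen $F_j$ ($j<i$) together with the at most $2(s-i)$ coordinates in the future kernels $B_j$ ($j>i$), for a total of at most $(k+2)(s-1)$ forbidden coordinates of $[k]\times[n]$ outside $B_i$. Because $B_i$ was extracted as a kernel, $|\calF_i[B_i]|$ exceeds the extraction threshold, and each forbidden coordinate kills at most a $1/n$ fraction of the extensions; the hypothesis $n > 2^5 s\log_2(sk)$ is calibrated precisely so that this selection remains feasible at every step, contradicting the assumption that $\calF_1,\dots,\calF_s$ has no rainbow matching.

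Finally, (d) and (e) follow from an energy-counting argument. Each singleton kernel extracted from $\calF_i$ deletes at least a threshold-many elements of $\calG_i$, and these deletions are disjoint across distinct extractions (since each extraction removes all sets containing the new kernel). Using $|\calF_i| \leq (s-1)n^{k-1} + c$ with $c$ of lower order than $n^{k-1}$, dividing yields $|\calS_i^{(1)}| \leq 2(s-1)$ for the appropriate threshold choice. An analogous two-dimensional count, where each extracted pair removes at least a threshold fraction of $n^{k-2}$ elements (and these deletions are again disjoint across pair extractions), yields $|\calS_i^{(2)}| \leq 4(s-1)^2$. The hard part of a fully rigorous writeup is the synchronized calibration of the extraction threshold, the spread parameter $R$, and the constant $2^5$ in the hypothesis on $n$, so that the constants $2^{15}$, $2(s-1)$, and $4(s-1)^2$ in (b), (d), and (e) all fall out simultaneously; I would import the parameter choices of \cite{satisfying_sequence} directly rather than rederive them.
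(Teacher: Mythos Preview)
Your outline for parts (a)--(c) tracks the paper's proof closely: the paper also runs an iterative extraction on each $\calF_i$ with spread parameter $r=2^5 s\log_2(sk)$, then defers (c) to \cite{satisfying_sequence}. One correction to your sketch of (c): the claim that ``each forbidden coordinate kills at most a $1/n$ fraction of the extensions'' is false as written --- $\calF_i[B_i]$ need not be spread. The actual mechanism is that the paper extracts an \emph{inclusion-maximal} $S$ with $|\calG_i(S)|\ge r^{-|S|}|\calG_i|$, so that $\calG_i(S)$ is $r$-spread at the moment of extraction; the lift then goes through via the spread lemma rather than a naive $1/n$ count.

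The genuine gap is your argument for (d) and (e). Your energy-counting approach fails for two independent reasons. First, you invoke the bound $|\calF_i|\le (s-1)n^{k-1}+c$, but no such upper bound appears in the hypotheses of the theorem --- only ``no rainbow matching'' is assumed, and nothing prevents some $\calF_i$ from being all of $\calT_{n,k}$. Second, even granting that bound, the arithmetic is inconsistent with (b): to get $|\calS_i^{(1)}|\le 2(s-1)$ by dividing $|\calF_i|$ by the singleton-extraction threshold, that threshold must be at least $n^{k-1}/2$; but then the leftover $\calF_i'$ is only $2$-spread over singletons, far too weak for the spread lemma to yield the $2^{15}s^3\log_2(sk)^3 n^{k-3}$ bound in (b), which needs $r$-spread with $r=2^5 s\log_2(sk)$.

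The paper's route to (d) and (e) is entirely different and does not count anything. After the extraction produces $\calS_i$ satisfying (a)--(c), the paper performs post-hoc \emph{modifications}: if some vertex $(j,a)$ lies in $\ge 2s-1$ pairs of $\calS_i^{(2)}$, promote $\{(j,a)\}$ into $\calS_i^{(1)}$ and delete those pairs; if after this $|\calS_i^{(2)}|>4(s-1)^2$, replace $\calS_i$ by $\{\varnothing\}$; and if $|\calS_i^{(1)}|\ge 2s-1$, likewise replace $\calS_i$ by $\{\varnothing\}$. The point is that each such replacement \emph{preserves} (c): whenever $\calS_i$ has that many singletons (resp.\ pairs), any partial rainbow matching $(B_j)_{j\ne i}$ uses at most $2(s-1)$ vertices, so one can always find an unused $B_i$ in the original $\calS_i$, contradicting (c) for the unmodified families. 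This pigeonhole-and-replace trick is the missing idea in your proposal.
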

In comparison, the result in \cite[Thm.~9]{satisfying_sequence}
asserts that there exists $\calS_1,\dots,\calS_s$
satisfying conditions \ref{item:elements}, \ref{item:union},
\ref{item:no_matching}, and \ref{item:small_S2}.
For our applications, we need our $\calS_1,\dots,\calS_s$ 
to satisfy \ref{item:small_S1} as well,
and we prove that below.
For completeness, we also include the proof of parts
\ref{item:elements}, \ref{item:union}, and \ref{item:small_S2}.
The proof of part \ref{item:no_matching} is more technical 
and hence will be omitted.
\begin{proof}
Let $r=2^5 s \log_2(sk)$.
For each $i\in [s]$, we construct $\calS_i$ as follows.
Initialize $\calG_i=\calF_i$ and $\calS_i=\varnothing$.
Then repeat the following steps:
\begin{itemize}
\item Choose an inclusion-maximal $S\subseteq [k]\times [n]$
such that $|\calG_i(S)| \geq r^{-|S|} |\calG_i|$.
This exists since $\varnothing$ works.
\item If $|S|\geq 3$ or $\calG_i=\varnothing$, then stop.
\item Otherwise, add $S$ as an element to $\calS_i$ 
and redefine $\calG_i$ to be $\calG_i\setminus\calG_i[S]$.
\end{itemize}
The process finishes when either $\calG_i=\varnothing$
or we find a set $S\subseteq [k]\times [n]$ 
such that $|S|\geq 3$ and $|\calG_i(S)| 
\geq r^{-|S|}|\calG_i|$. 
We claim that at this point, conditions \ref{item:elements}, 
\ref{item:union}, and \ref{item:no_matching} are satisfied.
 We verify these.
\begin{enumerate}[label=(\alph*)]
\item By construction, we always add sets of sizes 
$0$, $1$, or $2$.
\item By construction, we have 
$\calF_i \supseteq \calT_{n,k}[\calS_i] \cup \calG_i$ 
at any point in the algorithm.
Thus, in the final stage, $\calG_i\supseteq \calF_i'$.
If $\calG_i=\varnothing$, then the claim is clear.
Otherwise, we have 
$$|\calF_i'| \leq |\calG_i| 
\leq r^{|S|} |\calG_i(S)| 
\leq r^{|S|} n^{k-|S|} \stackrel{(*)}\leq r^3n^{k-3}
= 2^{15}s^3\log_2(sk)^3 n^{k-3},$$
where the inequality marked ($*$)
follows from the assumption $r<n$.

\item See the proof of 
\cite[Thm.~9]{satisfying_sequence} for details.
\end{enumerate}
Now, we will modify $\calS_i$ so that \ref{item:small_S2} holds,
and then so that \ref{item:small_S1} holds (and all other conditions
are preserved).
\begin{enumerate}
\item[(e)] First, we modify $\calS_1,\dots,\calS_s$ so that 
each element $(j,a)\in [k]\times [n]$ 
appears in at most $2(s-1)$ sets in $\calS_i^{(2)}$.
Assume that the element $(j,a)$ appears in $\calS_i^{(2)}$ 
at least $2s-1$ times.
Then we add $\{(j,a)\}$ to $\calS_i^{(1)}$
and remove every element containing $(j,a)$ from $\calS_i^{(2)}$.
Let the resulting set be $\calS_i'$.
This modification clearly preserves \ref{item:elements} and \ref{item:union},
so we have to check \ref{item:no_matching}, i.e., it does not create 
additional matchings.
Suppose that there is a matching 
$B_1\in\calS_1$, \dots, $B_i=\{(j,a)\}\in\calS_i'$, \dots, 
$B_s\in\calS_s$. 
The union $\bigcup_{j\neq i} B_j$ has at most $2s-2$ elements,
so there exists an element $B_i'\in \calS_i^{(2)}$
that contains $(j,a)$ and does not intersect this union.
Replacing  $B_i$ with $B_i'$ gives a matching in the original
$\calS_1,\dots,\calS_s$, which contradicts \ref{item:no_matching}.

Next, if $|\calS_i^{(2)}| > 4(s-1)^2$,
then we may replace $\calS_i$ with $\{\varnothing\}$.
Again, the only nontrivial item to check is \ref{item:no_matching}.
Suppose there is a matching 
$B_1\in\calS_1$, \dots, $\varnothing\in\calS_i'$,
\dots, $B_s\in\calS_s'$.
From the previous paragraph, for each $j\neq i$,
there are at most 
$2\cdot 2(s-1)$ elements in $\calS_i^{(2)}$
that meet $B_j$.
Thus, there are at most $4(s-1)^2$ elements in 
$\calS_i^{(2)}$ that intersect $B_j$ for some $j\neq i$,
which means that there exists an element $B_i'\in\calS_i^{(2)}$
that does not intersect $B_j$ for all $j\neq i$,
so replacing $B_i$ with $B_i'$ gives a matching of 
$\calS_1,\dots,\calS_s$, which contradicts \ref{item:no_matching}.
\item[(d)] 
Assume for contradiction that 
$|\calS_i^{(1)}| \geq 2s-1$. 
Then we claim that we may replace $\calS_i$ with $\{\varnothing\}$.
This modification preserves
\ref{item:elements}, \ref{item:union},
and \ref{item:small_S2},
so we have to check that it preserves \ref{item:no_matching}.
Suppose that there is a matching 
$B_1\in\calS_1,\dots,\varnothing\in\calS_i,\dots,B_s\in\calS_s$.
Then one may replace $B_i$ with an element in 
$\calS_i^{(1)}$ not contained in $\bigcup_{j\neq i}B_j$,
which must exist because 
$\left|\bigcup_{j\neq i} B_j\right| \leq 2(s-1)$.
This gives a matching in $\calS_1,\dots,\calS_s$,
which contradicts \ref{item:no_matching}.
\qedhere 
\end{enumerate}
\end{proof}
\subsection{Proof of \texorpdfstring{\Cref{thm:without_k}}
{Theorem \ref*{thm:without_k}}}
We now prove \Cref{thm:without_k}, which we reproduce 
the statement below.
\begin{theorem*}
If $n > \max(2^5 s\log_2(sk), s)$ and 
$$c \geq n^{k-1} + \max\left(14sn^{k-\frac 32}\sqrt{\log(2ks)},
8kn^{k-1}\log(2ks)\right) + 2^{15} s^3\log_2(ks)^3 n^{k-3},$$
then the sequence $f_i = (i-1)n^{k-1}+c$ is satisfying.
\end{theorem*}
Assume for the sake of contradiction that $\calF_1,\dots,\calF_s$
has no matching.
Let $\calS_1,\dots,\calS_s$ be as in \Cref{thm:spread_approx}.
We define 
$$u\coloneq s\sqrt{\frac{\log ks}n}.$$

Property \ref{item:no_matching} implies that one 
cannot find a rainbow matching in 
$(\calT_{n,k}[\calS_i])_{i=1}^s$
(because if there is a rainbow matching in $\calT_{n,k}[\calS_1]$,
\dots, $\calT_{n,k}[\calS_s]$,
then one can find a rainbow matching 
in $\calS_1,\dots,\calS_s$ by selecting corresponding subsets,
contradicting \ref{item:no_matching}.)
We now find a matching in $(\calT_{n,k}[\calS_i])_{i=1}^s$
to obtain a contradiction.
To do this, let $\calM\subseteq \calT_{n,k}$ 
be a uniformly random perfect matching of $[n]^k$
(so $|\calM|=n$). 

We claim that with high probability, 
$|\calM\cap\calT_{n,k}[\calS_i]|\geq i$.
This clearly always holds when $\calS_i$ contains $\varnothing$.
Thus, we assume that $\varnothing\notin\calS_i$.
We let 
$$\calU_1 := \calT_{n,k}\big[\calS_i^{(1)}\big] 
\quad\text{and}\quad \calU_2 :=
\calT_{n,k}\big[\calS_i^{(2)}\big]
\setminus \calU_1.$$
We then break the expression 
$|\calM\cap \calT_{n,k}[\calS_i]|$ into two terms:
\begin{equation}
\label{eq:concentration_AS}
|\calM\cap\calT_{n,k}[\calS_i]| 
= |\calM\cap \calU_1|
+ |\calM \cap \calU_2|.
\end{equation}
To handle the second term,
we use property \ref{item:small_S2} to get that 
$|\calU_2| \leq 
\big|\calT_{n,k}\big[\calS_i^{(2)}\big]\big| < 4s^2 n^{k-2}$,
so by our concentration result, \Cref{cor:concentration_matching}, 
we find that 
\begin{equation}
\label{eq:concentration_AS2}
\bbP \left(\left|\calM \cap \calU_2\right|
\leq \frac{|\calU_2|}{n^{k-1}}
- \max(4u, 8\log(2ks))\right)
< \frac{1}{ks}.
\end{equation}

We now consider the more difficult first term 
$|\calM\cap\calU_1|$.
We write $\calS_1^{(1)}$ as the union 
$\calA_1\cup \dots\cup \calA_k$, 
where for each $i$,
$\calA_i$ only contains sets of the form 
$\{(i,a)\}$ for $a\in [n]$.
We also let $a_i = |\calA_i|$.
Thus, $\calU_1 = \bigcup_{i=1}^k \calT_{n,k}[\calA_i]$.
We define
$$\calB_\ell = \{F\in\calU_1
: F\in\calT_{n,k}[\calA_j]
\text{ for at least } \ell 
\text{ values of } j\}.$$
For all $i\geq 2$, we can bound the size of $\calB_\ell$ by 
\begin{align*}
|\calB_\ell| &\leq \sum_{1\leq j_1 < \dots < j_\ell \leq k}
|\calT_{n,k}[\calA_{j_1}] \cap \dots \cap \calT_{n,k}[\calA_{j_\ell}]| \\
&= \sum_{1\leq j_1 < \dots < j_\ell \leq k}
a_{j_1} \cdots a_{j_\ell} n^{k-\ell} \\
&\leq \frac 1{\ell!}(a_1+\dots+a_k)^\ell n^{k-\ell} \\
&<  \frac 1{\ell!} (2s)^\ell n^{k-\ell}
\tag{property \ref{item:small_S1}} \\
&\leq \frac{2^\ell}{\ell!} s^2 n^{k-2}.
\tag{$\ell\geq 2$ and $s\leq n$}
\end{align*}
Thus, by our concentration result, \Cref{cor:concentration_matching}, we have that 
for $\ell\geq 2$,
\begin{equation}
\label{eq:concentration_Bi}
\mathbb P\left(|\calM\cap\calB_\ell| 
\geq \frac{|\calB_\ell|}{n^{k-1}}
+ \max\left(2\sqrt{\frac{2^\ell}{\ell!}}\cdot u, 8\log(2ks)\right)
\cdot u\right) < \frac 1{ks}.
\end{equation}
Next, we use the following fact:
if $C_1,\dots,C_k$ are sets and 
$D_\ell$ is the set of elements appearing in at least $\ell$
of the $k$ sets $C_1,\dots,C_k$, then 
$$\left|\bigcup_{\ell=1}^k C_\ell\right| 
= \left(\sum_{i=\ell}^k |C_\ell|\right) - |D_2| - |D_3| - 
\dots - |D_k|.$$
This fact implies that
\begin{align*}
|\calM\cap\calU_1| 
&= \left(\sum_{i=1}^k |\calM\cap\calT_{n,k}[\calA_i]|\right)
- |\calM\cap\calB_2| - \dots - |\calM\cap\calB_k| \\
&= (a_1+\dots+a_k) 
- |\calM\cap\calB_2| - \dots - |\calM\cap\calB_k| \\
&= |\calS_i^{(1)}| - 
|\calM\cap\calB_2| - \dots - |\calM\cap\calB_k|.
\end{align*}
Combining this with \eqref{eq:concentration_Bi},
using the union bound, and
noting that $\sum_{\ell=2}^\infty \sqrt{2^\ell/\ell!}
< 4.5053 < 5$ gives
\begin{equation}
\label{eq:concentration_AS1}
\mathbb P\left(|\calM\cap\calU_1| 
\leq \frac{|\calU_1|}{n^{k-1}}
- \max(10u, 8(k-1)\log(2ks)) \right)
< \frac{k-1}{ks}.
\end{equation}

Finally, plugging \eqref{eq:concentration_AS2}
and \eqref{eq:concentration_AS1}
into \eqref{eq:concentration_AS}
and using the union bound gives 
$$\mathbb P\left(|\calM\cap\calT_{n,k}[\calS_i]| 
\leq \frac{|\calT_{n,k}[\calS_i]|}{n^{k-1}}
- \max(14u, 8k\log(2ks))\right)
< \frac 1s.
$$
Thus, by the union bound, there exists a matching $\calM$
for which the above event does not happen for any $i$.
In other words, for all $i$, we have
\begin{align*}
|\calM\cap\calT_{n,k}[\calS_i]| 
&\geq \frac{|\calT_{n,k}[\calS_i]|}{n^{k-1}}
- \max(14u, 8k\log(2ks)) \\
&\geq \frac{|\calF_i| - |\calF_i'|}{n^{k-1}}
- \max(14u, 8k\log(2ks))
\tag{property \ref{item:union}}\\
&\geq (i-1) + \frac{c}{n^{k-1}} - 
\max(14u, 8k\log(2ks)) 
- \frac{2^{15}s^3\log_2(sk)^3}{n^2},
\end{align*}
so we have that $|\calM\cap\calT_{n,k}[\calS_i]| \geq i$
for all $i$.
By going through $i=1,2,\dots,s$ in order
and picking an element in $\calM\cap\calT_{n,k}[\calS_i]$
not previously used, we can find a rainbow matching in 
$(\calT_{n,k}[\calS_i])_{i=1}^s$ as desired.
This proves \Cref{thm:without_k}.

\section{Polynomial Method}
\label{sec:polynomial_method}
In this section, we prove \Cref{thm:polynomial_method}.
The key tools we will use are the Schwartz--Zippel lemma
\cite{schwartz}
and the multivariate generalization of combinatorial 
nullstellensatz by Do\u gan, Er\"gur, Mundo,
and Tsigaridas \cite{multivar_null}.
\begin{lemma}[{Schwartz--Zippel Lemma, 
\cite[Lem.~1]{schwartz}}]
\label{lem:schwartz_zippel}
Let $f\in\mathbb Q[x_1,\dots,x_n]$ be a polynomial of degree $d$.
Let $S$ be a finite subset of $\mathbb Q$.
Then,
$$\big|\{(a_1,\dots,a_n)\in S^n : f(a_1,\dots,a_n)=0\}\big|
\leq d |S|^{n-1}.$$
\end{lemma}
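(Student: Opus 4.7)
The plan is to prove the Schwartz--Zippel lemma by induction on the number of variables $n$, with the base case $n=1$ following from the standard fact that a nonzero univariate polynomial of degree $d$ over a field has at most $d$ roots (assuming $f$ is not identically zero; the statement is vacuous or trivial when $f\equiv 0$ since then $S^n$ could have up to $|S|^n$ zeros, but in that case $d$ would be $-\infty$ and the convention is that we only consider nonzero $f$).

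For the inductive step, I would peel off the last variable. Write
\[
f(x_1,\dots,x_n) = \sum_{i=0}^k x_n^i \, g_i(x_1,\dots,x_{n-1}),
\]
where $k$ is the largest exponent of $x_n$ that appears in $f$. Then $g_k$ is a nonzero polynomial in $n-1$ variables of degree at most $d-k$. Partition the potential zeros $(a_1,\dots,a_n)\in S^n$ of $f$ according to whether $g_k(a_1,\dots,a_{n-1})$ vanishes. If $g_k(a_1,\dots,a_{n-1})=0$, then $a_n$ can be anything in $S$, contributing at most $|S|$ zeros per such $(a_1,\dots,a_{n-1})$; the inductive hypothesis applied to $g_k$ bounds the number of such $(a_1,\dots,a_{n-1})$ by $(d-k)|S|^{n-2}$. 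If $g_k(a_1,\dots,a_{n-1})\neq 0$, then $f(a_1,\dots,a_{n-1},x_n)$ is a nonzero univariate polynomial in $x_n$ of degree exactly $k$, so it has at most $k$ roots in $S$, contributing at most $k|S|^{n-1}$ zeros in total.

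Adding the two contributions yields
\[
(d-k)|S|^{n-2}\cdot |S| + k|S|^{n-1} = d|S|^{n-1},
\]
which is the desired bound. The only mildly delicate point is choosing $g_k$ (the coefficient of the \emph{highest} power of $x_n$) rather than the constant coefficient, so that the univariate polynomial $f(a_1,\dots,a_{n-1},x_n)$ is guaranteed to be nonzero whenever $g_k(a_1,\dots,a_{n-1})\neq 0$; there is no real obstacle here, just bookkeeping. Since this is a well-known lemma and the paper cites \cite{schwartz} directly, I expect the authors either omit the proof or give essentially this short induction.
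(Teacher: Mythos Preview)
Your inductive argument is correct and is the standard proof of the Schwartz--Zippel lemma. The paper does not supply its own proof of this statement; it simply cites \cite{schwartz} and uses the lemma as a black box, so there is nothing to compare against --- your expectation was accurate.
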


In order to state the multivariate combinatorial nullstellensatz,
we make the following definition.
\begin{definition}
For any set $S\subset\mathbb Q^n$, define 
\begin{align*}
I(S) & \coloneq  \{f\in\mathbb Q[x_1,\dots,x_n] : f(\vec x)=0
\text{ for all }\vec x\in S\} \\
\deg S & \coloneq 
\min_{\substack{f\in I(S) \\ f\neq 0}} 
\deg f.
\end{align*}
\end{definition}
\begin{theorem}[{\cite[Thm.~1.4]{multivar_null}}]
\label{thm:multivar_combonull}
Let $f$ be a polynomial in $n_1+\dots+n_m$ variables
$x_{11}$, $\dots$, $x_{1n_1}$, $\dots$,
$x_{m1}$, $\dots$, $x_{mn_m}$
with coefficients in $\mathbb Q$.
For each $i\in [m]$, let $\deg_i f$ denote 
the degree of $f$ with respect to 
variables $x_{i1}$, $\dots$, $x_{in_i}$
(and treating other variables as degree $0$).
For each $i\in [m]$ and $j\in [n_i]$,
let $e_{ij}\geq 0$ be an integer such that
$\sum_j e_{ij} = \deg_i f$ for all $i\in [m]$ and
the coefficient of $\prod_{i=1}^m \prod_{j=1}^{n_i} x_{ij}^{e_{ij}}$
in $f$ is nonzero.
For each $i\in [m]$, let
$S_i\subseteq \mathbb Q^{n_i}$ such that $\deg S_i > \deg_i f$.

Then there exist $\vec s_i\in S_i$
such that $f(\vec s_1,\dots,\vec s_m)\neq 0$.
\end{theorem}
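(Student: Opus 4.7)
The plan is to apply the multivariate combinatorial nullstellensatz (\Cref{thm:multivar_combonull}) to the polynomial
$$f(x_1,\dots,x_s,y_1,\dots,y_s) \;=\; \prod_{1\leq i<j\leq s}(x_i-x_j)(y_i-y_j),$$
grouping the variables as $(x_i,y_i)$ for each color $i\in[s]$. The total degree of $f$ is $2\binom{s}{2}=s(s-1)=\sum_{i=1}^s(a_i+b_i)$. Expanding each Vandermonde factor via its Laplace expansion gives
$$\prod_{i<j}(x_i-x_j)\prod_{i<j}(y_i-y_j) \;=\;\Bigl(\sum_\sigma \operatorname{sgn}(\sigma)\prod_i x_i^{\sigma(i)-1}\Bigr)\Bigl(\sum_\tau \operatorname{sgn}(\tau)\prod_i y_i^{\tau(i)-1}\Bigr),$$
so the coefficient of $\prod_i x_i^{a_i} y_i^{b_i}$ is the product of signs of the two permutations encoding $(a_i)$ and $(b_i)$, which is nonzero. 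Thus $e_{i,1}=a_i$, $e_{i,2}=b_i$ realize a top-degree monomial with nonzero coefficient, and $d_i := a_i+b_i$ is the exponent sum in the $i$-th color block.

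Next I would verify the nondegeneracy hypothesis $\deg S_i > d_i$, where $S_i := \calF_i\subseteq \mathbb Q^2$. Suppose for contradiction that some nonzero polynomial $g\in I(\calF_i)$ has $\deg g\leq a_i+b_i$. Applying the Schwartz--Zippel lemma (\Cref{lem:schwartz_zippel}) to $g$ in $2$ variables over $S=[n]$ shows that $g$ vanishes on at most $(a_i+b_i)\,n^{2-1} = n(a_i+b_i)$ points of $[n]^2$. Since $\calF_i$ is contained in the zero set of $g$, this forces $|\calF_i|\leq n(a_i+b_i)$, contradicting the hypothesis $|\calF_i|>n(a_i+b_i)$.

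With both hypotheses of \Cref{thm:multivar_combonull} in place, it yields points $(x_i,y_i)\in \calF_i$ for every $i\in[s]$ such that $f(x_1,\dots,x_s,y_1,\dots,y_s)\neq 0$. By construction of $f$, this means $(x_i-x_j)(y_i-y_j)\neq 0$ for every pair $i<j$, i.e.\ $x_i\neq x_j$ \emph{and} $y_i\neq y_j$. Under the $k=2$ notion of disjointness from the introduction, these tuples $(x_i,y_i)\in\calF_i$ form the desired rainbow matching. There is no serious obstacle here once the polynomial is correctly chosen: the permutation structure of $(a_i),(b_i)$ is exactly what guarantees a nonzero coefficient at the correct total degree, and Schwartz--Zippel converts the size hypothesis on $\calF_i$ into the degree condition the nullstellensatz requires. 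The main care point is bookkeeping the degrees so that $\sum e_{ij} = \deg f$ holds as an equality (not just $\leq$), which is the reason the two permutations together sum to exactly $s(s-1)$.
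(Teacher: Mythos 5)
Your proposal does not prove the statement it was asked to prove. The statement in question is the multivariate combinatorial nullstellensatz itself (\Cref{thm:multivar_combonull}, the result of Do\u gan, Er\"gur, Mundo, and Tsigaridas cited as \cite[Thm.~1.4]{multivar_null}): for a polynomial $f$ in grouped variables with a nonzero top-degree coefficient on the monomial $\prod_{i,j} x_{ij}^{e_{ij}}$, and sets $S_i\subseteq\mathbb Q^{n_i}$ with $\deg S_i > d_i$, there is a point of $S_1\times\cdots\times S_m$ at which $f$ does not vanish. What you wrote is instead a proof of \Cref{thm:polynomial_method} (the $k=2$ rainbow matching result), and its very first step is to \emph{invoke} \Cref{thm:multivar_combonull} as a black box. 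As an argument for the target statement this is circular: every nontrivial ingredient --- why a nonzero coefficient on a monomial of full degree, together with the hypothesis $\deg S_i > d_i$ on each grouped set, forces a nonvanishing point --- is assumed rather than established. Nothing in your write-up engages with the actual content of the nullstellensatz, e.g.\ how the ideal-theoretic quantity $\deg S_i$ interacts with the top-degree part of $f$, which is where any genuine proof (say, by induction on the number of blocks, extracting the leading monomial block by block and using that no nonzero polynomial of degree at most $d_i$ vanishes identically on $S_i$) must do its work.

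For context, the paper itself offers no proof of this theorem either; it is imported as an external citation, and the argument you reproduced --- the Vandermonde expansion, the Schwartz--Zippel step converting $|\calF_i|>n(a_i+b_i)$ into $\deg\calF_i>a_i+b_i$, and the final application --- is essentially the paper's proof of \Cref{thm:polynomial_method}, which you have recreated accurately. So your work is correct and useful, but it is attached to the wrong statement: to complete the assigned task you would need to either prove \Cref{thm:multivar_combonull} from scratch or explicitly treat it as a cited external result, not as something your argument establishes.
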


We now prove \Cref{thm:polynomial_method},
which we reproduce the statement below.
\begin{theorem*}
Let $k=2$, and
let $(a_1,\dots,a_s)$ and $(b_1,\dots,b_s)$ be two permutations of 
$\{0,1,\dots,s-1\}$.
Then the sequence $f_i = n(a_i+b_i)$ is satisfying.
\end{theorem*}
\begin{proof}[Proof of \Cref{thm:polynomial_method}]
Let $\calF_1,\dots,\calF_s\subseteq [n]^2$
with $|\calF_i| > n(a_i+b_i)$. 
Consider the polynomial 
\begin{align*}
f(x_1,y_1,\dots,x_s,y_s) &= \prod_{1\leq i<j\leq s} 
(x_i-x_j)(y_i-y_j) \\
&= \left(\sum_{\sigma} \operatorname{sign}(\sigma) 
x_1^{\sigma(1)} \cdots x_s^{\sigma(s)} \right)
\left(\sum_{\tau} \operatorname{sign}(\tau) 
y_1^{\tau(1)} \cdots y_s^{\tau(s)} \right),
\end{align*}
where the sum runs through permutations $\sigma$ and $\tau$
of $[s]$, and the last equality follows from
Laplace expansions of Vandermonde determinants.
This gives that the coefficient of 
$x_1^{a_1}\cdots x_s^{a_s} y_1^{b_1}\cdots y_s^{b_s}$
is $\pm 1$, depending on the sign of permutations $(a_i)$
and $(b_i)$.
This verifies that the relevant coefficient is nonzero.

For any  polynomial $g$ that vanishes 
on $\calF_i$, we have 
$\calF_i \subseteq \{(x,y)\in [n]^2 : g(x,y)=0\}$,
so by Schwartz--Zippel lemma (\Cref{lem:schwartz_zippel}),
$$|\calF_i| \leq \deg g\cdot n^{2-1} = n\cdot \deg g,$$
Therefore, $\deg\calF_i > a_i+b_i$. 
Thus, by \Cref{thm:multivar_combonull}, there exists 
$(x_i,y_i)\in\calF_i$ for each $i\in [s]$ such that
$f(x_1,y_1,\dots,x_s,y_s) \neq 0$,
which means that $x_1,\dots,x_s$
and $y_1,\dots,y_s$ are pairwise distinct.
Hence, tuples $(x_i,y_i)$ yield the desired matching.
\end{proof}
\printbibliography
\end{document}